\newtheorem{remark}{Remark}[section]
\numberwithin{theorem}{section}
\numberwithin{equation}{section}
\numberwithin{algorithm}{section}
\newcommand{\lr}[1]{\llbracket#1\rrbracket}
\title{Fast auxiliary space preconditioners on surfaces}
\begin{document}


  \author{Yuwen Li%
         \thanks{Department of Mathematics, The Pennsylvania State University, University Park, PA 16802 {Email:} yuwenli925@gmail.com}
}
  \maketitle

  \begin{abstract}
  This work presents uniform preconditioners for the discrete Laplace--Beltrami operator on hypersurfaces. In particular, within the framework of fast auxiliary space preconditioning (FASP), we develop efficient and user-friendly multilevel preconditioners for the Laplace--Beltrami type equation discretized by Lagrange, nonconforming linear, and discontinuous Galerkin elements. The analysis applies to semi-definite problems on a closed surface. Numerical experiments on 2d surfaces and 3d hypersurfaces are presented to illustrate the efficiency of the proposed preconditioners.
  \end{abstract}
  \begin{keywords}
  {preconditioner, multigrid, auxiliary space, Laplace--Beltrami equation, nonconforming methods, discontinuous Galerkin methods}
  \end{keywords}
  \begin{AMS}{65N30, 65N55, 65F08}\end{AMS}
  \pagestyle{myheadings}
  \thispagestyle{plain}
  \markboth{Y.~Li}{FASP on surfaces}


\section{Introduction}
Discretizations of partial differential equations (PDEs) often lead to sparse large-scale and ill-conditioned algebraic systems of linear equations. Those discrete linear systems should be solved by well-designed fast linear solvers otherwise the computational cost would be unacceptable. In practice, multilevel iterative solvers are among the most efficient and popular linear solvers for discretized PDEs. In addition, the performance of these multilevel solvers could be improved when used in Krylov subspace methods as preconditioners. On Euclidean domains, the theory of multilevel methods is well established, see, e.g., \cite{Brandt1977,BankDupont1981,Hackbusch1985,BramblePasciakWangXu1991,Xu1992,XuZikatanov2002}.

In recent decades, numerical methods for solving PDEs on surfaces has been a popular and important research area, see \cite{DeckelnickDziukElliott2005,DziukElliott2013} and references therein for an introduction. To efficiently implement numerical PDEs on surfaces, fast surface linear solvers are indispensable. In contrast to planar multigrid, the hierarchy of triangulated surfaces is never nested. As a result, solving algebraic linear systems from surfaces is often more challenging. In fact, surface multilevel iterative methods are still in the early stage and many fundamental questions have not been addressed yet. In \cite{AksoyluKhodakovskySchroder2005}, a surface multigrid method is developed based on a surface mesh coarsening algorithm. The work \cite{MaesKunothBultheel2007} analyzes the BPX multigrid for elliptic equations on spheres. The analysis of the hierarchical basis multigrid method on two dimensional surfaces could be found in \cite{KornhuberYserentant2008}. For surface linear elements, \cite{BonitoPasciak2012} presents a general analysis of multilevel methods including the multigrid V-cycle. A multigrid solver for the closest point finite difference scheme on surfaces is presented in \cite{ChenMacdonald2015}. Besides the aforementioned geometric multigrid methods, the discrete surface PDEs could also be directly solved by the algebraic multigrid (AMG), cf.~\cite{RugeStuben1987,BrandtMcCormickRuge1985,WanChanSmith1999,BankSmith2002,XuZikatanov2017}.

In this work, we first consider the second order elliptic equation on a hypersurface $\mathcal{M}$ in $\mathbb{R}^{d+1}$ ($d$ is a positive integer), which is discretized by the surface linear element. To derive surface  geometric multigrid, we utilize a sequence of piecewise flat hypersurfaces $\{\mathcal{M}_j\}_{j=0}^J$, which approximates $\mathcal{M}$ from the coarsest $\mathcal{M}_0$ to the finest level $\mathcal{M}_J$. 
Let $\mathcal{T}_j$ be the set of $d$-dimensional faces of $\mathcal{M}_j,$ and $V_j$ the space of globally continuous and piecewise linear polynomials on $\mathcal{M}_j$ with respect to $\mathcal{T}_j$. Although not nested in the classical sense, $\{V_j\}_{j=0}^J$ is still logically nested via certain natural intergrid transfer operator. From this point of view, classical BPX preconditioner, hierarchical basis multigrid, and standard multigrid were constructed and implemented in \cite{AksoyluKhodakovskySchroder2005,MaesKunothBultheel2007,KornhuberYserentant2008,BonitoPasciak2012}. In \cite{BonitoPasciak2012}, rigorous convergence analysis of surface multigrids was done by lifting $\{\mathcal{T}_j\}_{j=0}^J$ to $\mathcal{M}$ and then applying the classical multigrid analysis. Due to its perturbation nature, the analysis in \cite{BonitoPasciak2012} assumes that the mesh size of $\mathcal{T}_0$ is small enough.
  
For surface Lagrange elements, we propose a new multilevel approach based on fast auxiliary space preconditioning (FASP) in \cite{Xu1996}. In particular, the auxiliary space is the space of continuous and piecewise linear elements on the initial surface $\mathcal{M}_0$, which is fixed and piecewise affine. The auxiliary transfer operator is available and bi-Lipschitz under common assumptions used in surface finite element literature. Since $\mathcal{M}_0$ is a fixed polytope with flat faces, uniform preconditioners on $\mathcal{M}_0$ directly follow from  the nested grid hierarchy on $\mathcal{M}_0$ and the multilevel theory on Euclidean domains. Then a combination of the preconditioning result on $\mathcal{M}_0$ and the auxiliary inter-surface operator yields a preconditioner for the discrete problem on $\mathcal{M}_h$. The analysis is new and independent of the assumption on the small mesh size of $\mathcal{M}_0$. On a closed surface, our approach preconditions the semi-definite Laplace--Beltrami operator  by a positive definite operator on the reference surface $\mathcal{M}_0.$

In addition, the proposed FASP approach leads to efficient preconditioners for the Crouzeix--Raviart (CR) element and discontinuous Galerkin (DG) method on the approximate surface $\mathcal{M}_h$. We use the conforming linear element space on $\mathcal{M}_h$ as the auxiliary space and the inclusion mapping as the transfer operator. In other words, the resulting linear solver is a two-level method using linear nodal elements in the coarse level, which could be further approximated by the established geometric multigrid. If the coarse solve is simply  replaced with AMG, we obtain semi-analytic algebraic CR and DG solvers on surfaces without using any grid hierarchy. For CR and DG discretizations on surfaces, such two-level FASP solvers outperform the direct AMG solvers in several numerical experiments. For DG methods on Euclidean domains, the two-level auxiliary space preconditioners could be found in e.g. \cite{DobrevLazarovVassilevskiZikatanov2006,AyusoZikatanov2009}.

The rest of this paper is organized as follows. In Section \ref{secabstract}, we introduce auxiliary space lemmas and the preconditioned conjugate gradient method in the Hilbert space. In Section \ref{seclinear}, we develop auxiliary space preconditioners for the Laplace--Beltrami type equation discretized by conforming linear elements. Section \ref{secNCDG} is devoted to the FASP solvers for the CR and DG discretizations. The proposed preconditioners are tested in several numerical experiments in Section \ref{secNE}. Possible extensions to higher order methods are discussed in Section \ref{secconclusion}.

\section{Abstract framework}\label{secabstract}

For a Hilbert space $\mathcal{V}$, let $(\cdot,\cdot)_\mathcal{V}$ denote its inner product, $\|\cdot\|_\mathcal{V}$ the $\mathcal{V}$-norm, $\mathcal{V}^\prime$ the dual space of $\mathcal{V},$ $I=I_\mathcal{V}$ the identity mapping on $\mathcal{V}$, and $\langle\cdot,\cdot\rangle=\langle\cdot,\cdot\rangle_{\mathcal{V}'\times\mathcal{V}}$ the action of $\mathcal{V}'$ on $\mathcal{V}$. For $g_1: \mathcal{V}\rightarrow\mathcal{V}^\prime$ and $g_2: \mathcal{V}^\prime\rightarrow\mathcal{V}$, let $g_1^t: \mathcal{V}\rightarrow\mathcal{V}^\prime$ and $g_2^t: \mathcal{V}^\prime\rightarrow\mathcal{V}$ be defined as
\begin{align*}
        &\langle g_1^tv_1,v_2\rangle=\langle g_1v_2,v_1\rangle,\quad\forall v_1, v_2\in\mathcal{V},\\
    &\langle r_1, g_2^tr_2\rangle=\langle r_2, g_2r_1\rangle,\quad\forall r_1, r_2\in\mathcal{V}^\prime.
\end{align*} 
Given a linear operator $g: \mathcal{V}_1\rightarrow\mathcal{V}_2$, let $R(g)$ denote its range, $N(g)$ the kernel of $g$, and $g^\prime: \mathcal{V}_2^\prime\rightarrow\mathcal{V}_1^\prime$ the adjoint of $g$, i.e.,
\begin{equation*}
        \langle g_1^\prime r,v\rangle=\langle r,gv\rangle,\quad\forall v\in\mathcal{V}_1,~\forall r\in\mathcal{V}_2^\prime.
\end{equation*}
For a bounded linear operator $\mathcal{A}: \mathcal{V}\rightarrow\mathcal{V}^\prime$, we say it is symmetric and positive semi-definite (SPSD) provided $\mathcal{A}=\mathcal{A}^t$ and $\langle\mathcal{A}v,v\rangle\geq0~\forall v\in\mathcal{V}.$
The SPSD operator $\mathcal{A}$ defines a bilinear form and a semi-norm on $\mathcal{V}$ by 
\begin{equation*}
    (v,w)_\mathcal{A}:=\langle\mathcal{A}v,w\rangle,\quad|v|^2_{\mathcal{A}}:=\langle\mathcal{A}v,v\rangle\quad\forall v,w\in\mathcal{V}.
\end{equation*}
We say $\mathcal{A}: \mathcal{V}\rightarrow\mathcal{V}^\prime$ is symmetric and positive definite (SPD) provided $\mathcal{A}$ is SPSD and $N(\mathcal{A})=\{0\}$. If $\mathcal{A}$ is SPD, $(\cdot,\cdot)_\mathcal{A}$ is an inner product, $|\cdot|_\mathcal{A}$ is a norm, and the notation $|\cdot|_\mathcal{A}$ is  replaced  with $\|\cdot\|_\mathcal{A}.$ For the kernel $\mathcal{N}:=N(\mathcal{A})$ and $v\in\mathcal{V}$, we define 
\begin{equation*}
    [v]:=\big\{w\in\mathcal{V}: w-v\in\mathcal{N}\big\},
\end{equation*}
the equivalence class containing $v$, and the quotient Hilbert space 
\begin{equation*}
     \mathcal{V}/\mathcal{N}:=\big\{[v]: v\in\mathcal{V}\big\}.
\end{equation*}
It is noted that the dual space of the quotient space $\mathcal{V}/\mathcal{N}$ could be defined as  a subspace of $\mathcal{V}^\prime$ as follows
\begin{equation*}
    (\mathcal{V}/\mathcal{N})^\prime:=\big\{r\in\mathcal{V}^\prime: \langle r,v\rangle=0~\forall v\in\mathcal{N}\big\}.
\end{equation*}
In the literature, $(\mathcal{V}/\mathcal{N})^\prime$ called is the polar set of $\mathcal{N}$, see, e.g., \cite{GiraultRaviart1986,BoffiBrezziFortin2013}. The next elementary lemma characterizes  $(\mathcal{V}/\mathcal{N})^\prime$ and the proof is included for completeness.
\begin{lemma}\label{rangeA}
Let $\mathcal{A}: \mathcal{V}\rightarrow\mathcal{V}^\prime$ be a SPSD operator with the kernel $\mathcal{N}=N(\mathcal{A})$ and a closed range. Then 
we have
\begin{equation*}
    R(\mathcal{A})=(\mathcal{V}/\mathcal{N})^\prime.
\end{equation*}
\end{lemma}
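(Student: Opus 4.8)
The claim is $R(\mathcal{A}) = (\mathcal{V}/\mathcal{N})'$, a set equality between two subspaces of $\mathcal{V}'$. I would prove it by establishing the two inclusions separately.

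For $R(\mathcal{A}) \subseteq (\mathcal{V}/\mathcal{N})'$: this direction is immediate and requires no closedness hypothesis. If $r = \mathcal{A}u$ for some $u \in \mathcal{V}$, then for any $v \in \mathcal{N}$ we have $\langle r, v\rangle = \langle \mathcal{A}u, v\rangle = \langle \mathcal{A}^t u, v\rangle = \langle \mathcal{A}v, u\rangle = \langle 0, u\rangle = 0$, using symmetry $\mathcal{A}=\mathcal{A}^t$ and $\mathcal{A}v = 0$. Hence $r \in (\mathcal{V}/\mathcal{N})'$.

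For the reverse inclusion $(\mathcal{V}/\mathcal{N})' \subseteq R(\mathcal{A})$: here the closed-range hypothesis is essential. The natural tool is the closed range theorem. Since $\mathcal{A}$ has closed range and $\mathcal{A}=\mathcal{A}^t$ (so $\mathcal{A}$ is, up to the Riesz identification of $\mathcal{V}$ with $\mathcal{V}'$, self-adjoint), the closed range theorem gives $R(\mathcal{A}) = {}^{\perp}N(\mathcal{A}^t) = {}^{\perp}N(\mathcal{A})$, where the left annihilator of a subspace $\mathcal{S}\subseteq\mathcal{V}$ is precisely $\{r \in \mathcal{V}': \langle r, v\rangle = 0~\forall v\in\mathcal{S}\}$. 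But $\{r\in\mathcal{V}': \langle r,v\rangle=0~\forall v\in N(\mathcal{A})\}$ is exactly the definition of $(\mathcal{V}/\mathcal{N})'$ given in the text, so $R(\mathcal{A}) = (\mathcal{V}/\mathcal{N})'$. Alternatively, to keep the argument self-contained, I would decompose $\mathcal{V} = \mathcal{N} \oplus \mathcal{N}^{\perp}$ orthogonally (valid since $\mathcal{N}$, being a kernel of a bounded operator, is closed), observe that $\mathcal{A}$ restricted to $\mathcal{N}^\perp$ is injective with range equal to $R(\mathcal{A})$, and that this restriction, viewed as a bijection onto its closed range, has a bounded inverse by the open mapping theorem; then given $r \in (\mathcal{V}/\mathcal{N})'$, the functional $r$ descends to $\mathcal{V}/\mathcal{N} \cong \mathcal{N}^\perp$ and one solves $\mathcal{A}u = r$ via the Lax--Milgram/Riesz argument applied to the coercive form $(\cdot,\cdot)_{\mathcal{A}}$ on $\mathcal{N}^\perp$.

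The main obstacle is the reverse inclusion, and specifically making precise why closedness of $R(\mathcal{A})$ forces surjectivity onto the full polar set rather than merely a dense subspace; without closed range one only gets $\overline{R(\mathcal{A})} = (\mathcal{V}/\mathcal{N})'$. I expect the cleanest write-up is the closed range theorem route, with a one-line verification that the annihilator $N(\mathcal{A}^t)^\perp$ coincides with the text's definition of $(\mathcal{V}/\mathcal{N})'$; the self-contained orthogonal-decomposition argument is a fallback if the paper prefers not to invoke the closed range theorem directly.
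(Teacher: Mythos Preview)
Your proposal is correct and essentially follows the same route as the paper: both arguments hinge on the closed range theorem together with the symmetry $\mathcal{A}=\mathcal{A}^t$. The paper makes the Riesz identification explicit by introducing $\mathcal{B}:\mathcal{V}'\to\mathcal{V}$ and applying the closed range theorem to the symmetric operator $\mathcal{B}\mathcal{A}:\mathcal{V}\to\mathcal{V}$ (so that $R(\mathcal{B}\mathcal{A})=N(\mathcal{B}\mathcal{A})^{\perp_{\mathcal{B}^{-1}}}$), then translates back to $R(\mathcal{A})$; you instead invoke the Banach-space form $R(\mathcal{A})={}^{\perp}N(\mathcal{A}^t)$ directly, which is the same statement after the reflexive identification $\mathcal{V}''\cong\mathcal{V}$ you allude to.
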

\begin{proof}
Let $\mathcal{B}: \mathcal{V}^\prime\rightarrow\mathcal{V}$ be the Riesz representation. The range of $\mathcal{B}\mathcal{A}: \mathcal{V}\rightarrow\mathcal{V}$ is closed and $\mathcal{B}\mathcal{A}$ is symmetric with respect to $(\cdot,\cdot)_{\mathcal{B}^{-1}}$. Let $\mathcal{W}^{\perp\mathcal{B}^{-1}}$ be the orthogonal complement of  a subspace $\mathcal{W}$ with respect to $(\cdot,\cdot)_{\mathcal{B}^{-1}}$ in $\mathcal{V}$. Then using $R(\mathcal{B}\mathcal{A})=N(\mathcal{B}\mathcal{A})^{\perp\mathcal{B}^{-1}}$ (by the closed range theorem) and $N(\mathcal{B}\mathcal{A})=N(\mathcal{A})$, we obtain
\begin{align*}
    R(\mathcal{A})&=\mathcal{B}^{-1}R(\mathcal{B}\mathcal{A})=\mathcal{B}^{-1}\left(N(\mathcal{B}\mathcal{A})^{\perp\mathcal{B}^{-1}}\right)=\mathcal{B}^{-1}(N(\mathcal{A})^{\perp\mathcal{B}^{-1}})\\
    &=\mathcal{B}^{-1}\big(\big\{v\in\mathcal{V}: (v,z)_{\mathcal{B}^{-1}}=0~\forall z\in\mathcal{N}\big\}\big)\\
    &=\big\{r\in\mathcal{V}^\prime: \langle r,z\rangle=0~\forall z\in\mathcal{N}\big\},
\end{align*}
which completes the proof.
\end{proof}

\subsection{Semi-definite conjugate gradient}
Given a SPSD operator $\mathcal{A}: \mathcal{V}\rightarrow\mathcal{V}^\prime$ and $f\in\mathcal{V}^\prime$, we consider the operator equation
\begin{equation}\label{Auf}
    \mathcal{A}u=f.
\end{equation}
It follows from Lemma \ref{rangeA} that \eqref{Auf} has a solution in $\mathcal{V}$ provided the compatibility condition $f\in(\mathcal{V}/\mathcal{N})^\prime$ holds. In this case, the solution to \eqref{Auf} is uniquely determined modulo $\mathcal{N}$. In practice, the preconditioned conjugate gradient (PCG) method is among the most successful iterative algorithms for solving \eqref{Auf} in finite-dimensional spaces. Let $\mathcal{B}: \mathcal{V}^\prime\rightarrow\mathcal{V}$ be a SPD operator. 
If \eqref{Auf} arises from the discretization of positive definite PDEs, the PCG would uniformly converge provided the condition number $\kappa(\mathcal{B}\mathcal{A})$ is uniformly bounded with respect to discretization parameters such as the mesh size. In this case, $\mathcal{B}$ is said to be a preconditioner for $\mathcal{A}.$ For theoretical convenience, we write the PCG method for solving \eqref{Auf} in the Hilbert space, see Algorithm \ref{PCG}.
\begin{algorithm}[H]\label{PCG}
\caption{Preconditioned  conjugate gradient (PCG)}
Input: An initial guess $u_0\in\mathcal{V}$ and an error tolerance $\textsf{tol}>0$.

Initialize: $r_0 = f-\mathcal{A}u_0$, $z_0=\mathcal{B} r_0$, $p_0 = z_0$, and $k=0$;

\textbf{While}{$\langle r_k,z_k\rangle\geq\textsf{tol}$}

$\qquad\alpha_k=\frac{\langle r_k,z_k \rangle}{|p_k|_\mathcal{A}^2}$;

$\qquad u_{k+1} =  u_k + \alpha_k p_k$; 

$\qquad r_{k+1} =  r_k - \alpha_k\mathcal{A}p_k$; 

$\qquad z_{k+1} =  \mathcal{B} r_{k+1}$; 

$\qquad \beta_k =  \frac{\langle r_{k+1},z_{k+1}\rangle}{\langle r_k,z_k\rangle}$;

$\qquad p_{k+1} = z_{k+1} + \beta_k p_{k}$; 

$\qquad k=k+1$;

\textbf{EndWhile}
\end{algorithm}
From the above algorithm,
it is observed that $r_k\in(\mathcal{V}/\mathcal{N})^\prime$ for each $k$. Therefore we have
$z_k\perp_{\mathcal{B}^{-1}}\mathcal{N}$ and $p_k\perp_{\mathcal{B}^{-1}}\mathcal{N}$. It then follows that before the iteration stops, $|p_k|_{\mathcal{A}}\neq0$ and Algorithm \ref{PCG} is well defined for a SPSD operator $\mathcal{A}.$

The semi-definite operator
$\mathcal{A}$ induces the linear operator $[\mathcal{A}]: \mathcal{V}/\mathcal{N}\rightarrow(\mathcal{V}/\mathcal{N})^\prime$ by
\begin{equation*}
    \langle[\mathcal{A}][v],w\rangle:=\langle\mathcal{A}v,w\rangle,\quad\forall [v]\in\mathcal{V}/\mathcal{N},~\forall w\in \mathcal{V}.
\end{equation*}
Clearly $[\mathcal{A}]$ is well-defined and $[\mathcal{A}]$ is SPD by Lemma \ref{rangeA}. Similarly, a preconditioner $\mathcal{B}: \mathcal{V}^\prime\rightarrow\mathcal{V}$ defines a linear operator $[\mathcal{B}]: (\mathcal{V}/\mathcal{N})^\prime\rightarrow\mathcal{V}/\mathcal{N}$ by
\begin{equation*}
    [\mathcal{B}]r:=[\mathcal{B}r],\quad\forall r\in (\mathcal{V}/\mathcal{N})^\prime.
\end{equation*}
The next lemma shows that $[\mathcal{B}]$ is also SPD.
\begin{lemma}\label{SPDB}
Let $\mathcal{B}: \mathcal{V}^\prime\rightarrow\mathcal{V}$ be SPD. Then $[\mathcal{B}]: (\mathcal{V}/\mathcal{N})^\prime\rightarrow\mathcal{V}/\mathcal{N}$ is SPD.
\end{lemma}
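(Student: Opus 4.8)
The plan is to verify the three defining properties of an SPD operator for $[\mathcal{B}]$ directly from its definition, reducing each to the corresponding property already assumed for $\mathcal{B}$. First I would note that $[\mathcal{B}]$ is unambiguously defined: the element $[\mathcal{B}r]$ depends only on $r$, and since every $r\in(\mathcal{V}/\mathcal{N})'$ annihilates $\mathcal{N}$, the pairing $\langle r,[v]\rangle:=\langle r,v\rangle$ is independent of the representative $v\in[v]$. This pairing is what identifies $\big((\mathcal{V}/\mathcal{N})'\big)'$ with $\mathcal{V}/\mathcal{N}$ and thereby makes the assertion ``$[\mathcal{B}]$ is SPD'' meaningful in the first place, in parallel with $[\mathcal{A}]\colon\mathcal{V}/\mathcal{N}\to(\mathcal{V}/\mathcal{N})'$.

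For symmetry, given $r_1,r_2\in(\mathcal{V}/\mathcal{N})'$ I would compute $\langle r_1,[\mathcal{B}]r_2\rangle=\langle r_1,\mathcal{B}r_2\rangle$ and then use $\mathcal{B}=\mathcal{B}^t$, together with the defining relation of the transpose, to rewrite this as $\langle r_2,\mathcal{B}r_1\rangle=\langle r_2,[\mathcal{B}]r_1\rangle$. Positive semi-definiteness is immediate: $\langle r,[\mathcal{B}]r\rangle=\langle r,\mathcal{B}r\rangle\ge0$ because $\mathcal{B}$ is SPSD.

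The only step with real content is showing $N([\mathcal{B}])=\{[0]\}$. Suppose $[\mathcal{B}]r=[0]$, i.e.\ $\mathcal{B}r\in\mathcal{N}$. Since $r\in(\mathcal{V}/\mathcal{N})'$ annihilates $\mathcal{N}$, this gives $\langle r,\mathcal{B}r\rangle=0$. Now the bilinear form $(r_1,r_2)\mapsto\langle r_1,\mathcal{B}r_2\rangle$ on $\mathcal{V}'$ is symmetric and positive semi-definite, hence satisfies the Cauchy--Schwarz inequality; so $\langle r,\mathcal{B}r\rangle=0$ forces $\langle r_2,\mathcal{B}r\rangle=\langle r,\mathcal{B}r_2\rangle=0$ for every $r_2\in\mathcal{V}'$, which in turn forces $\mathcal{B}r=0$ (taking $r_2$ to be the Riesz representative of $\mathcal{B}r\in\mathcal{V}$). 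Since $\mathcal{B}$ is SPD, $N(\mathcal{B})=\{0\}$, so $r=0$, i.e.\ $N([\mathcal{B}])=\{[0]\}$. I expect this last argument to be the main obstacle: neither $\mathcal{B}r\in\mathcal{N}$ nor the polarity condition $r\perp\mathcal{N}$ is individually enough, and one must combine them through the semi-inner product induced by $\mathcal{B}$ and then invoke the injectivity of $\mathcal{B}$.
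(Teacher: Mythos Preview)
Your proposal is correct and follows essentially the same route as the paper: reduce to showing $N([\mathcal{B}])=\{0\}$, observe that $[\mathcal{B}]r=0$ means $\mathcal{B}r\in\mathcal{N}$, pair with $r\in(\mathcal{V}/\mathcal{N})'$ to get $\langle r,\mathcal{B}r\rangle=0$, and conclude $r=0$ from $\mathcal{B}$ being SPD. The only difference is that you spell out the passage from $\langle r,\mathcal{B}r\rangle=0$ to $r=0$ via Cauchy--Schwarz and injectivity of $\mathcal{B}$, whereas the paper states this in one line (``Because $\mathcal{B}$ is SPD, we have $r=0$''); your expanded version is a legitimate justification of that step.
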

\begin{proof}
Clearly $[\mathcal{B}]$ is SPSD. Given $r\in (\mathcal{V}/\mathcal{N})^\prime,$
$[\mathcal{B}]r=0$ in $\mathcal{V}/\mathcal{N}$ implies $\mathcal{B}r\in\mathcal{N}$ and thus $\langle r,\mathcal{B}r\rangle=0$. Because $\mathcal{B}$ is SPD, we have $r=0$ and $N([\mathcal{B}])=0$. 
\end{proof}

Throughout the rest of this paper,
the condition number $\kappa(\mathcal{B}\mathcal{A})$ is defined as the ratio between the maximum and minimum eigenvalues of $[\mathcal{B}][\mathcal{A}]: \mathcal{V}/\mathcal{N}\rightarrow\mathcal{V}/\mathcal{N}.$ 
Replacing $\mathcal{A}, \mathcal{B}, z_k, p_k$ in  Algorithm \ref{PCG} with $[\mathcal{A}], [\mathcal{B}], [z_k], [p_k]$, we obtain a PCG algorithm for the SPD problem $[\mathcal{A}][u]=f$ in the quotient space  $\mathcal{V}/\mathcal{N}.$
Therefore classical convergence analysis of PCG (cf.~\cite{Xu1992}) implies
\begin{equation*}
    \big\|[u]-[u_k]\big\|_{[\mathcal{A}]}\leq2\left(\frac{\kappa([\mathcal{B}][\mathcal{A}])-1}{\kappa([\mathcal{B}][\mathcal{A}])+1}\right)^k\big\|[u]-[u_0]\big\|_{[\mathcal{A}]},
\end{equation*}
or equivalently
\begin{equation*}
    |u-u_k|_{\mathcal{A}}\leq2\left(\frac{\kappa({\mathcal{B}}{\mathcal{A}})-1}{\kappa({\mathcal{B}}{\mathcal{A}})+1}\right)^k|u-u_0|_{\mathcal{A}}
\end{equation*}

Other existing theoretical discussions on iterative methods for singular and nearly singular PDEs could be found in e.g., \cite{BochevLehoucq2005,LeeWuXuZikatanov2007,LeeWuXuZikatanov2008,AyusoBrezziMariniXuZikatanov2014}. We develop the above framework because of theoretical convenience when analyzing the semi-definite problems considered in this paper.

\subsection{Auxiliary space preconditioning}
The following fictitious space lemma \cite{Nepomnyaschikh1992} turns out to be a powerful tool for estimating the conditioner number $\kappa(\mathcal{B}\mathcal{A})$ and developing uniform preconditioners, see, e.g., \cite{Xu1996,HiptmairXu2007,Chen2011}.
\begin{lemma}[Fictitious space lemma]\label{FSP}
Let $\mathcal{V}$, $\widetilde{\mathcal{V}}$ be Hilbert spaces and $\mathcal{A}: \mathcal{V}\rightarrow \mathcal{V}^\prime$, $\widetilde{\mathcal{A}}: \widetilde{\mathcal{V}}\rightarrow \widetilde{\mathcal{V}}^\prime$, $\widetilde{\mathcal{B}}: \widetilde{\mathcal{V}}^\prime\rightarrow \widetilde{\mathcal{V}}$ be SPD operators.
Assume $\Pi: \widetilde{\mathcal{V}}\rightarrow\mathcal{V}$ is a surjective linear operator, and 
\begin{itemize}
\item There exists a constant $c_0>0$ such that  $\|\Pi\tilde{v}\|_{\mathcal{A}}\leq c_0\|\tilde{v}\|_{\widetilde{\mathcal{A}}}$ for each $\tilde{v}\in \widetilde{\mathcal{V}};$
    \item 
There exists a constant $c_1>0$ such that given any $v\in\mathcal{V},$ some $\tilde{v}\in \widetilde{\mathcal{V}}$ satisfies 
$$\Pi\tilde{v}=v,\quad\|\tilde{v}\|_{\widetilde{\mathcal{A}}}\leq c_1\|v\|_{\mathcal{A}}.$$
\end{itemize}
Then for $\mathcal{B}=\Pi\widetilde{\mathcal{B}}\Pi^\prime: \mathcal{V}^\prime\rightarrow\mathcal{V}$ we have
\begin{equation*}
    \kappa(\mathcal{B}\mathcal{A})\leq \left(c_0c_1\right)^2\kappa(\widetilde{\mathcal{B}}\widetilde{\mathcal{A}}).
\end{equation*}
\end{lemma}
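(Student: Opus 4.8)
The plan is to bound the extreme eigenvalues of $\mathcal{B}\mathcal{A}$ from above and below in terms of those of $\widetilde{\mathcal{B}}\widetilde{\mathcal{A}}$, the two bounds contributing the factors $c_0^2$ and $c_1^2$, respectively. Since $\mathcal{A}$ is SPD here we have $\mathcal{N}=\{0\}$, and as $\Pi$ is surjective $\Pi^\prime$ is injective, so $\langle\mathcal{B}r,r\rangle=\langle\widetilde{\mathcal{B}}\Pi^\prime r,\Pi^\prime r\rangle>0$ for $r\neq0$; hence $\mathcal{B}=\Pi\widetilde{\mathcal{B}}\Pi^\prime$ is SPD and invertible. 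First I would record that $\mathcal{B}\mathcal{A}$ is self-adjoint with respect to both $(\cdot,\cdot)_{\mathcal{A}}$ and $(\cdot,\cdot)_{\mathcal{B}^{-1}}$, which yields the Rayleigh-quotient formulas
\[
\lambda_{\max}(\mathcal{B}\mathcal{A})=\sup_{0\neq r\in\mathcal{V}^\prime}\frac{\langle r,\mathcal{B}r\rangle}{\langle r,\mathcal{A}^{-1}r\rangle},\qquad
\lambda_{\min}(\mathcal{B}\mathcal{A})=\inf_{0\neq v\in\mathcal{V}}\frac{\langle\mathcal{A}v,v\rangle}{\langle\mathcal{B}^{-1}v,v\rangle},
\]
together with the analogous identities on $\widetilde{\mathcal{V}}$; here I also use the Riesz-type relation $\langle r,\mathcal{A}^{-1}r\rangle=\sup_{v\neq0}\langle r,v\rangle^2/\|v\|_{\mathcal{A}}^2$.

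For the upper bound I would write $\langle r,\mathcal{B}r\rangle=\langle\Pi^\prime r,\widetilde{\mathcal{B}}\,\Pi^\prime r\rangle\le\lambda_{\max}(\widetilde{\mathcal{B}}\widetilde{\mathcal{A}})\,\langle\Pi^\prime r,\widetilde{\mathcal{A}}^{-1}\Pi^\prime r\rangle$ and then expand the last factor by duality as $\langle\Pi^\prime r,\widetilde{\mathcal{A}}^{-1}\Pi^\prime r\rangle=\sup_{\tilde v\neq0}\langle r,\Pi\tilde v\rangle^2/\|\tilde v\|_{\widetilde{\mathcal{A}}}^2$. The first stability assumption $\|\Pi\tilde v\|_{\mathcal{A}}\le c_0\|\tilde v\|_{\widetilde{\mathcal{A}}}$ lets me replace $\|\tilde v\|_{\widetilde{\mathcal{A}}}$ in the denominator by $c_0^{-1}\|\Pi\tilde v\|_{\mathcal{A}}$ (the terms with $\Pi\tilde v=0$ contribute nothing), and surjectivity of $\Pi$ turns the supremum over $\widetilde{\mathcal{V}}$ into one over $\mathcal{V}$, giving $\langle\Pi^\prime r,\widetilde{\mathcal{A}}^{-1}\Pi^\prime r\rangle\le c_0^2\langle r,\mathcal{A}^{-1}r\rangle$. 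Combining yields $\lambda_{\max}(\mathcal{B}\mathcal{A})\le c_0^2\,\lambda_{\max}(\widetilde{\mathcal{B}}\widetilde{\mathcal{A}})$.

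For the lower bound the key ingredient is the variational identity $\langle\mathcal{B}^{-1}v,v\rangle=\min\{\langle\widetilde{\mathcal{B}}^{-1}\tilde v,\tilde v\rangle:\tilde v\in\widetilde{\mathcal{V}},\ \Pi\tilde v=v\}$, which I would prove by checking that $\tilde v^\star=\widetilde{\mathcal{B}}\Pi^\prime\mathcal{B}^{-1}v$ is admissible ($\Pi\tilde v^\star=\mathcal{B}\mathcal{B}^{-1}v=v$), attains the value $\langle\mathcal{B}^{-1}v,v\rangle$, and that any other admissible $\tilde v$ differs from $\tilde v^\star$ by an element of $N(\Pi)$ that is $\widetilde{\mathcal{B}}^{-1}$-orthogonal to $\tilde v^\star$, so its energy is no smaller. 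Given $v$, the second stability assumption provides an admissible $\tilde v$ with $\|\tilde v\|_{\widetilde{\mathcal{A}}}\le c_1\|v\|_{\mathcal{A}}$, whence $\langle\mathcal{B}^{-1}v,v\rangle\le\langle\widetilde{\mathcal{B}}^{-1}\tilde v,\tilde v\rangle\le\lambda_{\min}(\widetilde{\mathcal{B}}\widetilde{\mathcal{A}})^{-1}\langle\widetilde{\mathcal{A}}\tilde v,\tilde v\rangle\le c_1^2\,\lambda_{\min}(\widetilde{\mathcal{B}}\widetilde{\mathcal{A}})^{-1}\|v\|_{\mathcal{A}}^2$; this gives $\lambda_{\min}(\mathcal{B}\mathcal{A})\ge c_1^{-2}\lambda_{\min}(\widetilde{\mathcal{B}}\widetilde{\mathcal{A}})$. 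Dividing the two bounds produces $\kappa(\mathcal{B}\mathcal{A})\le(c_0c_1)^2\kappa(\widetilde{\mathcal{B}}\widetilde{\mathcal{A}})$.

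I expect the main obstacle to be the clean justification of the variational identity for $\langle\mathcal{B}^{-1}v,v\rangle$ and, more generally, the bookkeeping with dual norms and the adjoint $\Pi^\prime$ — in particular using surjectivity of $\Pi$ in exactly the two places it is needed (invertibility of $\mathcal{B}$, and converting suprema over $\widetilde{\mathcal{V}}$ into suprema over $\mathcal{V}$). Once these are in place the remaining steps are routine Rayleigh-quotient manipulations.
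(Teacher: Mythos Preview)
The paper does not actually prove Lemma~\ref{FSP}; it is quoted from \cite{Nepomnyaschikh1992} as a known tool and then applied. So there is no proof in the paper to compare against.

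Your argument is the standard and correct proof of the fictitious space lemma: bounding $\lambda_{\max}(\mathcal{B}\mathcal{A})$ via the duality representation of $\langle\Pi^\prime r,\widetilde{\mathcal{A}}^{-1}\Pi^\prime r\rangle$ together with the continuity assumption, and bounding $\lambda_{\min}(\mathcal{B}\mathcal{A})$ via the variational identity $\langle\mathcal{B}^{-1}v,v\rangle=\min_{\Pi\tilde v=v}\langle\widetilde{\mathcal{B}}^{-1}\tilde v,\tilde v\rangle$ together with the stable lifting assumption. Your verification of the variational identity (admissibility of $\tilde v^\star=\widetilde{\mathcal{B}}\Pi^\prime\mathcal{B}^{-1}v$, the $\widetilde{\mathcal{B}}^{-1}$-orthogonality of $\tilde v^\star$ to $N(\Pi)$) is exactly what is needed. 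One very minor remark: in the upper bound you do not actually need surjectivity of $\Pi$ to pass from the supremum over $\Pi\tilde v$ to the supremum over $\mathcal{V}$, since you only need the inequality $\le$; surjectivity is genuinely required only for the injectivity of $\Pi^\prime$ (hence positive definiteness and invertibility of $\mathcal{B}$) and implicitly for the lower bound via the existence of an admissible lifting, which the second hypothesis already guarantees.
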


Lemma \ref{FSP} directly yields a condition number
estimate for semi-definite operators.
\begin{corollary}\label{FSP2}
Let $\mathcal{V}$, $\widetilde{\mathcal{V}}$ be Hilbert spaces, $\widetilde{\mathcal{B}}: \widetilde{\mathcal{V}}^\prime\rightarrow \widetilde{\mathcal{V}}$ be SPD, and $\mathcal{A}: \mathcal{V}\rightarrow \mathcal{V}^\prime$, $\widetilde{\mathcal{A}}: \widetilde{\mathcal{V}}\rightarrow \widetilde{\mathcal{V}}^\prime$ be SPSD operators with closed ranges and kernels $\mathcal{N}=N(\mathcal{A})$, $\widetilde{\mathcal{N}}=N(\widetilde{\mathcal{A}})$.
Assume $\Pi: \widetilde{\mathcal{V}}\rightarrow\mathcal{V}$ is a linear operator, and 
\begin{itemize}
\item $\Pi$ preserves kernels: $\Pi(\widetilde{\mathcal{N}})\subseteq\mathcal{N};$
\item There exists a constant $c_0>0$ such that  $|\Pi\tilde{v}|_{\mathcal{A}}\leq c_0|\tilde{v}|_{\widetilde{\mathcal{A}}}$ for each $\tilde{v}\in \widetilde{\mathcal{V}};$
    \item 
There exists a constant $c_1>0$ such that for any $v\in\mathcal{V},$ some $\tilde{v}\in \widetilde{\mathcal{V}}$ satisfies 
$$\Pi\tilde{v}-v\in\mathcal{N},\quad|\tilde{v}|_{\widetilde{\mathcal{A}}}\leq c_1|v|_{\mathcal{A}}.$$
\end{itemize}
Then for $\mathcal{B}=\Pi\widetilde{\mathcal{B}}\Pi^\prime: \mathcal{V}^\prime\rightarrow\mathcal{V}$ we have
\begin{equation*}
    \kappa({\mathcal{B}}{\mathcal{A}})\leq \left(c_0c_1\right)^2\kappa({\widetilde{\mathcal{B}}}{\widetilde{\mathcal{A}}}).
\end{equation*}
\end{corollary}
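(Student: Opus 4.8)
The plan is to reduce to the SPD fictitious space lemma (Lemma \ref{FSP}) on the quotient spaces $\mathcal{V}/\mathcal{N}$ and $\widetilde{\mathcal{V}}/\widetilde{\mathcal{N}}$. By Lemma \ref{rangeA}, the induced operators $[\mathcal{A}]:\mathcal{V}/\mathcal{N}\to(\mathcal{V}/\mathcal{N})'$ and $[\widetilde{\mathcal{A}}]:\widetilde{\mathcal{V}}/\widetilde{\mathcal{N}}\to(\widetilde{\mathcal{V}}/\widetilde{\mathcal{N}})'$ are SPD, and by Lemma \ref{SPDB} the operator $[\widetilde{\mathcal{B}}]:(\widetilde{\mathcal{V}}/\widetilde{\mathcal{N}})'\to\widetilde{\mathcal{V}}/\widetilde{\mathcal{N}}$ is SPD; moreover $\kappa([\widetilde{\mathcal{B}}][\widetilde{\mathcal{A}}])=\kappa(\widetilde{\mathcal{B}}\widetilde{\mathcal{A}})$ by the definition of the condition number adopted in this section. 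So the three operators required by Lemma \ref{FSP} are in place, and it remains to produce an admissible transfer operator between the quotient spaces.

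First I would check that $\Pi$ descends to a well-defined linear map $[\Pi]:\widetilde{\mathcal{V}}/\widetilde{\mathcal{N}}\to\mathcal{V}/\mathcal{N}$ by $[\Pi][\tilde v]:=[\Pi\tilde v]$: this is exactly the kernel-preservation hypothesis $\Pi(\widetilde{\mathcal{N}})\subseteq\mathcal{N}$, since $\tilde v-\tilde w\in\widetilde{\mathcal{N}}$ forces $\Pi\tilde v-\Pi\tilde w\in\mathcal{N}$. Next I would transcribe the two quantitative bullets. Because $|\Pi\tilde v|_{\mathcal{A}}=\|[\Pi\tilde v]\|_{[\mathcal{A}]}$ and $|\tilde v|_{\widetilde{\mathcal{A}}}=\|[\tilde v]\|_{[\widetilde{\mathcal{A}}]}$, the continuity bound reads $\|[\Pi][\tilde v]\|_{[\mathcal{A}]}\le c_0\|[\tilde v]\|_{[\widetilde{\mathcal{A}}]}$, and the stable-decomposition bullet says precisely that every $[v]\in\mathcal{V}/\mathcal{N}$ admits some $[\tilde v]$ with $[\Pi][\tilde v]=[v]$ and $\|[\tilde v]\|_{[\widetilde{\mathcal{A}}]}\le c_1\|[v]\|_{[\mathcal{A}]}$; in particular $[\Pi]$ is surjective. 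Applying Lemma \ref{FSP} with $(\mathcal{V}/\mathcal{N},\widetilde{\mathcal{V}}/\widetilde{\mathcal{N}},[\mathcal{A}],[\widetilde{\mathcal{A}}],[\widetilde{\mathcal{B}}],[\Pi])$ then gives $\kappa\big([\Pi][\widetilde{\mathcal{B}}][\Pi]'[\mathcal{A}]\big)\le(c_0c_1)^2\kappa([\widetilde{\mathcal{B}}][\widetilde{\mathcal{A}}])$.

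It remains to identify $[\Pi][\widetilde{\mathcal{B}}][\Pi]'$ with $[\mathcal{B}]$ for $\mathcal{B}=\Pi\widetilde{\mathcal{B}}\Pi'$, which is the one place where care is needed: the adjoint $[\Pi]'$ must be read through the polar-set identifications $(\mathcal{V}/\mathcal{N})'=R(\mathcal{A})\subseteq\mathcal{V}'$ and $(\widetilde{\mathcal{V}}/\widetilde{\mathcal{N}})'\subseteq\widetilde{\mathcal{V}}'$ of Lemma \ref{rangeA}. For $r\in(\mathcal{V}/\mathcal{N})'$ and $\tilde z\in\widetilde{\mathcal{N}}$ one has $\langle\Pi'r,\tilde z\rangle=\langle r,\Pi\tilde z\rangle=0$ since $\Pi\tilde z\in\mathcal{N}$ and $r$ annihilates $\mathcal{N}$, so $\Pi'$ maps $(\mathcal{V}/\mathcal{N})'$ into $(\widetilde{\mathcal{V}}/\widetilde{\mathcal{N}})'$; and $\langle[\Pi]'r,[\tilde v]\rangle=\langle r,\Pi\tilde v\rangle=\langle\Pi'r,\tilde v\rangle$ for all $\tilde v$ shows $[\Pi]'r=\Pi'r$ under this identification. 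Hence $[\Pi][\widetilde{\mathcal{B}}][\Pi]'r=[\Pi\widetilde{\mathcal{B}}\Pi'r]=[\mathcal{B}r]=[\mathcal{B}]r$, i.e.\ $[\Pi][\widetilde{\mathcal{B}}][\Pi]'=[\mathcal{B}]$ on $(\mathcal{V}/\mathcal{N})'$ (in particular $[\mathcal{B}]$ is SPD there, as already guaranteed by the surjectivity of $[\Pi]$), so the quantity bounded above equals $\kappa([\mathcal{B}][\mathcal{A}])=\kappa(\mathcal{B}\mathcal{A})$ and the claimed estimate follows. The main obstacle is exactly this last bookkeeping with duals of quotient spaces and the identity $[\Pi]'=\Pi'$; everything else is a direct translation of the hypotheses into the quotient setting.
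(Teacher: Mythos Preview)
Your proof is correct and follows essentially the same route as the paper: pass to the quotient spaces, use Lemmas~\ref{rangeA} and~\ref{SPDB} to obtain SPD operators $[\mathcal{A}]$, $[\widetilde{\mathcal{A}}]$, $[\widetilde{\mathcal{B}}]$, define $[\Pi]$ via kernel preservation, and apply Lemma~\ref{FSP}. The paper merely asserts the identity $[\mathcal{B}]=[\Pi][\widetilde{\mathcal{B}}][\Pi]'$ without justification, whereas you carefully verify it through the polar-set identification and the computation $[\Pi]'r=\Pi'r$; this extra bookkeeping is sound and is the only place where you go beyond what the paper writes out.
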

\begin{proof}
Using $\Pi(\widetilde{\mathcal{N}})\subseteq \mathcal{N},$ we obtain a well-defined operator $[\Pi]: \widetilde{\mathcal{V}}/\widetilde{\mathcal{N}}\rightarrow\mathcal{V}/\mathcal{N}$ given by $[\Pi][\tilde{v}]=[\Pi\tilde{v}]$ $\forall[\tilde{v}]\in\widetilde{\mathcal{V}}/\widetilde{\mathcal{N}}$. It follows from Lemmas \ref{rangeA} and \ref{SPDB} that the operators $[\mathcal{A}]: \mathcal{V}/\mathcal{N}\rightarrow(\mathcal{V}/\mathcal{N})^\prime$, $[\widetilde{\mathcal{A}}]: \widetilde{\mathcal{V}}/\widetilde{\mathcal{N}}\rightarrow(\widetilde{\mathcal{V}}/\widetilde{\mathcal{N}})^\prime$, $[\widetilde{\mathcal{B}}]: (\widetilde{\mathcal{V}}/\widetilde{\mathcal{N}})^\prime\rightarrow\widetilde{\mathcal{V}}/\widetilde{\mathcal{N}}$ are all SPD. Then we could finish the proof by noticing  $[\mathcal{B}]=[\Pi][\widetilde{\mathcal{B}}][\Pi]^\prime$ and using Lemma \ref{FSP} with $\mathcal{V}$, $\widetilde{\mathcal{V}}$, $\mathcal{A}$, $\widetilde{\mathcal{A}}$, $\widetilde{\mathcal{B}}$, $\Pi$ replaced by $\mathcal{V}/\mathcal{N}$, $\widetilde{\mathcal{V}}/\widetilde{\mathcal{N}}$, $[\mathcal{A}]$, $[\widetilde{\mathcal{A}}]$, $[\widetilde{\mathcal{B}}]$, $[\Pi]$.
\end{proof}
It follows from the above
corollary that a preconditioner $\widetilde{\mathcal{B}}$ for the semi-definite $\widetilde{\mathcal{A}}$ leads to a preconditioner    $\mathcal{B}$ for the semi-definite operator $\mathcal{A}$. Corollary \ref{FSP2} is the only tool used for preconditioning singular operators on surfaces.

Let $V$ be a subspace of $\mathcal{V}$ and $\mathcal{I}$ the inclusion from $V$ to $\mathcal{V}.$ Using Lemma \ref{FSP} with $\widetilde{\mathcal{V}}=\mathcal{V}_{\|\cdot\|_{\mathcal{S}^{-1}}}\times V$, $\Pi=(I_\mathcal{V},\mathcal{I}): \widetilde{\mathcal{V}}\rightarrow\mathcal{V}$, we obtain a two-level additive preconditioner in Lemma \ref{ASP} for the norm of $\mathcal{V}$, which is a special case of the auxiliary space lemma proposed in \cite{Xu1996} for preconditioning a wide range of discrete problems.
\begin{lemma}[Additive two-level preconditioner]\label{ASP}
Let $V\subset\mathcal{V}$ be  Hilbert spaces,  $\mathcal{A}: \mathcal{V}\rightarrow\mathcal{V}^\prime$, $A: V\rightarrow V^\prime$, and $B: V^\prime\rightarrow V$, $\mathcal{S}: \mathcal{V}^\prime\rightarrow\mathcal{V}$ be SPD. 
Let $\mathcal{I}: V\rightarrow\mathcal{V}$ be the inclusion, $\mathcal{Q}=\mathcal{I}^\prime: \mathcal{V}^\prime\rightarrow V^\prime,$ and $P: \mathcal{V}\rightarrow V$ be a linear operator. Assume 
\begin{itemize}
\item There exists a constant $c_0>0$ such that $\|v\|_{\mathcal{A}}\leq c_0\|v\|_{A}$ for each $v\in V$;
\item There exists constants $c_s, c_1, c_2>0$ such that for each $v\in\mathcal{V},$
\begin{align*}
    \|v\|_{\mathcal{A}}&\leq c_s\|v\|_{\mathcal{S}^{-1}},\\
    \|Pv\|_A&\leq c_1\|v\|_{\mathcal{A}},\\
    \|v-Pv\|_{\mathcal{S}^{-1}}&\leq c_2\|v\|_{\mathcal{A}}.
\end{align*}
\end{itemize}
Then the preconditioner $\mathcal{B}^a:=\mathcal{S}+B\mathcal{Q}$ satisfies
\begin{equation*}
    \kappa(\mathcal{B}^a\mathcal{A})\leq (c_1^2+c_2^2)(c_0^2+c_s^2)\kappa(BA).
\end{equation*}
\end{lemma}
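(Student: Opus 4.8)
The plan is to realize $\mathcal{B}^a=\mathcal{S}+B\mathcal{Q}$ as the preconditioner $\Pi\widetilde{\mathcal{B}}\Pi'$ produced by the fictitious space lemma (Lemma~\ref{FSP}) for a suitable auxiliary space, and then verify the two hypotheses of that lemma. Concretely, set $\widetilde{\mathcal{V}}:=\mathcal{V}\times V$ equipped with the norm $\|(w,v)\|_{\widetilde{\mathcal{A}}}^2:=\|w\|_{\mathcal{S}^{-1}}^2+\|v\|_A^2$, i.e. take $\widetilde{\mathcal{A}}=\mathrm{diag}(\mathcal{S}^{-1},A)$ and $\widetilde{\mathcal{B}}=\mathrm{diag}(\mathcal{S},B)$, and let $\Pi:=(I_\mathcal{V},\mathcal{I})\colon\widetilde{\mathcal{V}}\to\mathcal{V}$, $\Pi(w,v)=w+\mathcal{I}v$. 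A short computation with the definitions of the adjoint and of $\mathrm{diag}$ gives $\Pi\widetilde{\mathcal{B}}\Pi'=\mathcal{S}+\mathcal{I}B\mathcal{I}'=\mathcal{S}+B\mathcal{Q}=\mathcal{B}^a$, since $\mathcal{Q}=\mathcal{I}'$; I would also note $\kappa(\widetilde{\mathcal{B}}\widetilde{\mathcal{A}})=\kappa(\mathrm{diag}(I_\mathcal{V},BA))=\kappa(BA)$ because the identity block contributes eigenvalue $1$, which lies in the spectral interval of $BA$ (a consequence of $\|v\|_{\mathcal{A}}\le c_0\|v\|_A$ together with $\mathcal{S}$-boundedness; alternatively one absorbs the harmless factor into the constants).

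Next I would check surjectivity of $\Pi$: this is immediate since already $\Pi(w,0)=w$ for every $w\in\mathcal{V}$. Then the \emph{continuity} estimate $\|\Pi(w,v)\|_{\mathcal{A}}\le c_0'\|(w,v)\|_{\widetilde{\mathcal{A}}}$: by the triangle inequality $\|w+\mathcal{I}v\|_{\mathcal{A}}\le\|w\|_{\mathcal{A}}+\|v\|_{\mathcal{A}}\le c_s\|w\|_{\mathcal{S}^{-1}}+c_0\|v\|_A$, using the hypothesis $\|w\|_{\mathcal{A}}\le c_s\|w\|_{\mathcal{S}^{-1}}$ for $w\in\mathcal{V}$ and $\|v\|_{\mathcal{A}}\le c_0\|v\|_A$ for $v\in V$; by Cauchy--Schwarz this is $\le\sqrt{c_s^2+c_0^2}\,\|(w,v)\|_{\widetilde{\mathcal{A}}}$, so one may take the constant in Lemma~\ref{FSP} to be $c_0^{\mathrm{FSP}}=\sqrt{c_s^2+c_0^2}$.

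The \emph{stable decomposition} estimate is the main point: given $v\in\mathcal{V}$ I must exhibit $(w,u)\in\widetilde{\mathcal{V}}$ with $w+\mathcal{I}u=v$ and $\|w\|_{\mathcal{S}^{-1}}^2+\|u\|_A^2\le (c_1^{\mathrm{FSP}})^2\|v\|_{\mathcal{A}}^2$. The natural choice uses the given operator $P$: set $u:=Pv\in V$ and $w:=v-\mathcal{I}Pv\in\mathcal{V}$, so that $\Pi(w,u)=w+\mathcal{I}u=v$ by construction. Then $\|u\|_A=\|Pv\|_A\le c_1\|v\|_{\mathcal{A}}$ by the second hypothesis, and $\|w\|_{\mathcal{S}^{-1}}=\|v-Pv\|_{\mathcal{S}^{-1}}\le c_2\|v\|_{\mathcal{A}}$ by the third, whence $\|w\|_{\mathcal{S}^{-1}}^2+\|u\|_A^2\le(c_1^2+c_2^2)\|v\|_{\mathcal{A}}^2$ and one may take $c_1^{\mathrm{FSP}}=\sqrt{c_1^2+c_2^2}$. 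Feeding $c_0^{\mathrm{FSP}}c_1^{\mathrm{FSP}}=\sqrt{(c_s^2+c_0^2)(c_1^2+c_2^2)}$ into the conclusion of Lemma~\ref{FSP} yields exactly $\kappa(\mathcal{B}^a\mathcal{A})\le(c_1^2+c_2^2)(c_0^2+c_s^2)\kappa(BA)$.

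The only subtlety I anticipate, and the step I would be most careful about, is the identification $\kappa(\widetilde{\mathcal{B}}\widetilde{\mathcal{A}})=\kappa(BA)$: a priori the product of the block-diagonal operators has spectrum $\{1\}\cup\operatorname{spec}(BA)$, and one needs $1$ to lie within $[\lambda_{\min}(BA),\lambda_{\max}(BA)]$ so that adjoining it does not change the condition number. This follows because for $v\in V$ one has $\|v\|_A\le c_0\|v\|_{\mathcal{A}}$ is \emph{not} assumed — only the reverse — so instead I would argue directly that the spectral bounds of $BA$ already bracket $1$ under the stated hypotheses (or, cleanly, simply note that enlarging the spectral interval to include $1$ can only inflate $\kappa$ by a factor already subsumed in $(c_0^2+c_s^2)(c_1^2+c_2^2)$, since taking $v\in V$ in the continuity and decomposition bounds forces $c_0c_1\ge1$ and $c_sc_2\ge1$-type relations). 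Everything else is the routine bookkeeping of adjoints and block operators sketched above.
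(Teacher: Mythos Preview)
Your approach is exactly the one the paper indicates in the sentence preceding the lemma: apply Lemma~\ref{FSP} with $\widetilde{\mathcal{V}}=\mathcal{V}_{\|\cdot\|_{\mathcal{S}^{-1}}}\times V$, $\widetilde{\mathcal{A}}=\mathrm{diag}(\mathcal{S}^{-1},A)$, $\widetilde{\mathcal{B}}=\mathrm{diag}(\mathcal{S},B)$, and $\Pi=(I_\mathcal{V},\mathcal{I})$, then check continuity via $c_s,c_0$ and the stable decomposition via $u=Pv$, $w=v-Pv$. The subtlety you flag about $\kappa(\widetilde{\mathcal{B}}\widetilde{\mathcal{A}})$ versus $\kappa(BA)$ (i.e.\ whether $1\in[\lambda_{\min}(BA),\lambda_{\max}(BA)]$) is genuine and is likewise glossed over in the paper; it is harmless in all the applications here since the bounds are only used up to generic constants, but your proposed absorptions do not literally force the exact inequality as stated.
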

Lemma \ref{ASP} only deals with SPD operators which are sufficient for our purpose. One could use Corollary \ref{FSP2} to obtain a generalized version of Lemma \ref{ASP} that is able to handle semi-definite operators.

Using the spaces $V$, $\mathcal{V}$ and operators $A, \mathcal{A}, \mathcal{S}, \mathcal{Q}$  in Lemma \ref{ASP}, we also obtain a two-level multiplicative method described in Algorithm \ref{twolevelSSC}.
\begin{algorithm}[H]
\caption{Multiplicative two-level preconditioner $\mathcal{B}^m$}\label{twolevelSSC}
Input: $u_0\in\mathcal{V}$ and $g\in\mathcal{V}^\prime$.

$u_1=u_0+\mathcal{S}(g-\mathcal{A}u_0)$;

$u_2=u_1+A^{-1}\mathcal{Q}(g-\mathcal{A}u_1)$;

$u_3=u_2+\mathcal{S}^t(g-\mathcal{A}u_2)$;

Output: $\mathcal{B}^mg:=u_3.$
\end{algorithm}
The next lemma addresses the convergence of $\mathcal{B}^m$ and the proof could be found in \cite{Zikatanov2008,XuZikatanov2017}.
\begin{lemma}[Multiplicative two-level method]\label{ASP2}
Let $V\subset\mathcal{V}$ be Hilbert spaces, $\mathcal{A}: \mathcal{V}\rightarrow\mathcal{V}^\prime$, $A: V\rightarrow V^\prime$, $\mathcal{S}: \mathcal{V}^\prime\rightarrow\mathcal{V}$ be SPD operators, and  $\bar{\mathcal{S}}=\mathcal{S}^t+\mathcal{S}-\mathcal{S}^t\mathcal{A}\mathcal{S}$ the symmetrization of $\mathcal{S}.$ 
Then $\mathcal{B}^m$ satisfies
\begin{equation*}
    \|I-\mathcal{B}^m\mathcal{A}\|_\mathcal{A}=1-\frac{1}{c},\quad c=\sup_{v\in\mathcal{V}, \|v\|_\mathcal{A}=1}\inf_{w\in \mathcal{V}}\|v-w\|_{\bar{\mathcal{S}}^{-1}}.
\end{equation*}
\end{lemma}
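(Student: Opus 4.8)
The plan is to prove the identity
$\|I-\mathcal{B}^m\mathcal{A}\|_{\mathcal{A}} = 1 - 1/c$ by reducing the two-level multiplicative
method $\mathcal{B}^m$ to a two-step successive subspace correction (SSC) iteration and
invoking the sharp XZ (Xu--Zikatanov) identity for multiplicative subspace
correction methods. First I would recognize that Algorithm \ref{twolevelSSC} is
exactly the symmetrized product iteration: a pre-smoothing step with $\mathcal{S}$, an
exact subspace solve $A^{-1}\mathcal{Q}$ on $V$, and a post-smoothing step with $\mathcal{S}^t$.
Writing the error propagation operator, one gets
$I - \mathcal{B}^m\mathcal{A} = (I - \mathcal{S}^t\mathcal{A})(I - \mathcal{I}A^{-1}\mathcal{Q}\mathcal{A})(I - \mathcal{S}\mathcal{A})$,
where $\mathcal{I}A^{-1}\mathcal{Q}\mathcal{A}$ is the $\mathcal{A}$-orthogonal projection $P_V$ onto $V$ (this uses
that $A$ is SPD on $V$, so $A^{-1}\mathcal{Q}\mathcal{A}$ is the exact solve, and $\mathcal{Q}=\mathcal{I}'$ makes
the Galerkin identity hold). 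The middle factor is an exact projection, so this is
a two-subspace SSC where one subspace is $\mathcal{V}$ itself (smoothed by $\mathcal{S}$) and the
other is $V$ (solved exactly).

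The key step is then to apply the XZ identity, which states that for a
multiplicative subspace correction with this structure,
$\|I-\mathcal{B}^m\mathcal{A}\|_{\mathcal{A}} = 1 - 1/c$ where $c$ is the stable-decomposition constant
measured in the symmetrized-smoother norm on the "inexact" space and the exact
$\mathcal{A}$-norm on the projected space. Because the $V$-correction is exact
(projection), the decomposition infimum collapses: writing $v = w + (v-w)$ with
$w \in V$ arbitrary, the contribution of the $w$-component to the
$\mathcal{A}$-norm is controlled trivially (it is a projection, costing nothing in the
identity's worst-case constant beyond what the remaining term costs), and the
residual $v - w$ lives in $\mathcal{V}$ and is measured in $\bar{\mathcal{S}}^{-1}$. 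Thus
$c = \sup_{\|v\|_{\mathcal{A}}=1}\inf_{w\in\mathcal{V}}\|v-w\|_{\bar{\mathcal{S}}^{-1}}$ — note the infimum is
over all of $\mathcal{V}$, not just $V$, which is precisely the statement of Lemma
\ref{ASP2}; the freedom to choose $w\in\mathcal{V}$ absorbs both the $V$-component and any
$\mathcal{A}$-orthogonal adjustment, since the exact projection handles the $V$-part with
no penalty.

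The main obstacle is bookkeeping the reduction to the XZ identity carefully:
verifying that $\mathcal{I}A^{-1}\mathcal{Q}\mathcal{A}$ really is the $\mathcal{A}$-orthogonal projector onto $V$
(the Galerkin orthogonality $\langle\mathcal{A}(v - P_V v), \phi\rangle = 0$ for all
$\phi\in V$ follows from $\mathcal{Q} = \mathcal{I}'$ and the definition of $A$ as the restriction
of $\mathcal{A}$ to $V$), and then tracking how the presence of an exact subspace solve
among the correction steps turns the generic XZ stable-decomposition constant
into the clean expression with the infimum over all of $\mathcal{V}$. Once the problem is
phrased as a standard symmetrized SSC with one exact and one inexact block, the
identity $\|I-\mathcal{B}^m\mathcal{A}\|_{\mathcal{A}} = 1-1/c$ is immediate from the cited references
\cite{Zikatanov2008,XuZikatanov2017}; the only real work is the translation, so I
would keep that part explicit and defer the identity itself to those sources.
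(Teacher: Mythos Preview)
Your overall approach matches the paper's exactly: the paper does not prove Lemma~\ref{ASP2} at all but simply states that ``the proof could be found in \cite{Zikatanov2008,XuZikatanov2017}.'' Your reduction---factoring the error propagator as $(I-\mathcal{S}^t\mathcal{A})(I-P_V)(I-\mathcal{S}\mathcal{A})$ with $P_V=\mathcal{I}A^{-1}\mathcal{Q}\mathcal{A}$ the $\mathcal{A}$-orthogonal projector onto $V$, then invoking the two-level XZ identity---is the standard route and is precisely what those references supply.

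There is, however, one point where you go astray. The lemma as printed has the infimum over $w\in\mathcal{V}$, and you try to rationalize this (``the freedom to choose $w\in\mathcal{V}$ absorbs both the $V$-component and any $\mathcal{A}$-orthogonal adjustment''). But that cannot be right: if $w$ ranges over all of $\mathcal{V}$ one simply takes $w=v$ and the infimum is zero, giving $c=0$ and a meaningless identity. This is a typo in the paper; the infimum must be over $w\in V$, the coarse subspace, which is the form the XZ two-grid identity actually takes in \cite{Zikatanov2008,XuZikatanov2017}. The same slip recurs in the proof of Theorem~\ref{CRpreconditioner2}, where the infimum is written over $\mathcal{V}_h$ but is then bounded by the specific choice $w=I_h^{\text{av}}v_h\in V_h$; the argument there only makes sense with the infimum over $V_h$. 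So rather than justifying the $\mathcal{V}$, you should correct it to $V$ and proceed---your sketch is otherwise sound.
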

Lemma \ref{ASP2} states that $I-\mathcal{B}^m\mathcal{A}$ is a contraction. As a consequence, $\mathcal{B}^m$ could be used as a preconditioner in the PCG method (cf.~\cite{Xu1992}).
\section{Laplace--Beltrami equation and linear nodal elements}\label{seclinear}
In $\mathbb{R}^{d+1}$, let $\mathcal{M}$ be  a closed Lipschitz hypersurface (the boundary $\partial\mathcal{M}=\emptyset$). Naturally $\mathcal{M}$ is endowed with a metric, which is the pullback of the Euclidean metric in $\mathbb{R}^{d+1}$ via the embedding $\mathcal{M}\hookrightarrow\mathbb{R}^{d+1}$. The orientation of $\mathcal{M}$ is given by its unit normal vector $\nu$. Given a Lipschitz function $v$ on $\mathcal{M},$ the tangential gradient field $\nabla_\mathcal{M}v$ on $\mathcal{M}$ is defined as
\begin{equation*}
    \nabla_\mathcal{M}v:=\nabla\tilde{v}-(\nu\cdot\nabla\tilde{v})\nu, 
\end{equation*}
where $\nabla$ is the full gradient in $\mathbb{R}^{d+1},$ $\tilde{v}$ is an extension of $v$ in an neighborhood of $\mathcal{M}.$ It is well known that $\nabla_\mathcal{M}v$ is independent of the choice of such extension.
 Let $d\mu$ denote the surface measure on $\mathcal{M}$ and $L^2(\mathcal{M})$ the space of $L^2$ integrable functions on $\mathcal{M}$. We make use of the following $L^2(\mathcal{M})$ inner product
\begin{equation*}
    (v,w)_{\mathcal{M}}=\int_\mathcal{M}vwd\mu,
\end{equation*}
and the induced $L^2(\mathcal{M})$ norm $\|\cdot\|_\mathcal{M}$. The surface Sobolev space $H^1(\mathcal{M})$ is
\begin{equation*}
    V=H^1(\mathcal{M}):=\big\{v\in L^2(\mathcal{M}): \|\nabla_\mathcal{M} v\|_{\mathcal{M}}<\infty\big\}.
\end{equation*}
With the help of $(\cdot,\cdot)_\mathcal{M}$, the surface divergence $\text{div}_\mathcal{M}$ is defined as the $L^2(\mathcal{M})$ adjoint of $-\nabla_\mathcal{M}$. Then $\Delta_\mathcal{M}=\text{div}_\mathcal{M}\nabla_\mathcal{M}$ is the famous Laplace--Beltrami operator (surface Laplacian) on $\mathcal{M}$. 

Given $c\in\{0,1\}$ and $f\in L^2(\mathcal{M})$, we consider the second order elliptic equation
\begin{equation}\label{rd}
    -\Delta_\mathcal{M}u+cu=f\text{  on }\mathcal{M}.
\end{equation}
When $c=0$, \eqref{rd} reduces to the Laplace--Beltrami equation.
The variational formulation of \eqref{rd} seeks $u\in V$ such that
\begin{equation}\label{LB}
		(\nabla_\mathcal{M} u,\nabla_\mathcal{M} v)_\mathcal{M}+c(u,v)_\mathcal{M}=(f,v)_{\mathcal{M}},\quad     v\in V.
\end{equation}
For $c=0,$ the solution of \eqref{LB} is uniquely determined modulo a constant. We shall develop efficient iterative solvers for several popular discretizations of \eqref{LB}.

\subsection{Linear nodal element discretization}\label{subsecDLB}
When devising numerical schemes for solving \eqref{LB}, $\mathcal{M}$ is often approximated by a polyhedral (polygonal when $d=2$) hypersurface  $\mathcal{M}_h$ with simplicial $d$-dimensional faces. The definitions of $\nabla_\mathcal{M}$, $(\cdot,\cdot)_\mathcal{M}$, $\|\cdot\|_\mathcal{M}$ might be extended to any Lipschitz set $U$ (such as $\mathcal{M}_h$) and be denoted as $\nabla_U$, $(\cdot,\cdot)_U$, $\|\cdot\|_U$. On $\mathcal{M}_h$, the $L^2$ norm $\|\cdot\|_{\mathcal{M}_h}$ is simplified as $\|\cdot\|.$ Let  $\mathcal{T}_h$ be the collection of all $d$-faces of $\mathcal{M}_h.$  
By $\mathcal{P}_r(U)$ we denote the space of polynomials at most degree $r$ on a Lipschitz set $U.$ The conforming linear finite element space is
\begin{align*}
    V_h=\{v_h\in H^1(\mathcal{M}_h): v_h|_\tau\in\mathcal{P}_1(\tau)~\forall \tau\in\mathcal{T}_h\}.
\end{align*}
The finite element discretization of \eqref{LB} is to find $u_h\in V_h$ such that
\begin{equation}\label{dLB}
		(\nabla_{\mathcal{M}_h} u_h,\nabla_{\mathcal{M}_h} v_h)_{\mathcal{M}_h}+c(u_h,v_h)_{\mathcal{M}_h}=(f_h,v_h)_{\mathcal{M}_h},\quad     v_h\in V_h,
\end{equation}
where $f_h\in L^2(\mathcal{M}_h)$  approximates $f$. Let $d\mu_h$ denote the surface measure on $\mathcal{M}_h,$ and $|\mathcal{M}_h|:=\int_{\mathcal{M}_h}1d\mu_h.$ When $c=0$, we further require that \begin{equation*}
    \bar{f}_h:=\frac{1}{|\mathcal{M}_h|}\int_{\mathcal{M}_h}f_hd\mu_h=0
\end{equation*}
such that \eqref{dLB} is uniquely solvable. The bilinear form in \eqref{dLB} induces a linear operator $A_h=A_h^c: V_h\rightarrow V_h^\prime$ given by
\begin{equation*}
    \langle A^c_hv_h,w_h\rangle=(\nabla_{\mathcal{M}_h} v_h,\nabla_{\mathcal{M}_h} w_h)_{\mathcal{M}_h}+c(v_h,w_h)_{\mathcal{M}_h},\quad\forall v_h, w_h\in V_h.
\end{equation*}
Clearly $A_h$ is semi-definite when $c=0$, where the kernel $N(A^0_h)$ consists of constant functions on $\mathcal{M}_h.$
In the following, we present multilevel preconditioners for $A_h$ that provide uniformly bounded condition numbers. In doing so, it is assumed that 
\begin{itemize}
    \item[\textbf{A1.}] There exists a reference polyhedral hypersurface $\mathcal{M}_0$  and a bi-Lipschitz mapping $\Phi_h: \mathcal{M}_0\rightarrow\mathcal{M}_h$.
    \item[\textbf{A2.}] There exists a nested sequence of conforming and shape regular reference grids $\{\widehat{\mathcal{T}}_j\}_{j=0}^J$ on $\mathcal{M}_0$, where $\widehat{\mathcal{T}}_{j+1}$ is a refinement of $\widehat{\mathcal{T}}_j$ for each $j$, $\Phi_h(\widehat{\mathcal{T}}_J)=\mathcal{T}_h$, and the coarsest mesh $\widehat{\mathcal{T}}_0$ is the set of $d$-faces of $\mathcal{M}_0$. 
\end{itemize}

\begin{figure}[tbhp]
\centering
\subfloat[8 elements]{\label{Sa}\includegraphics[width=4.3cm]{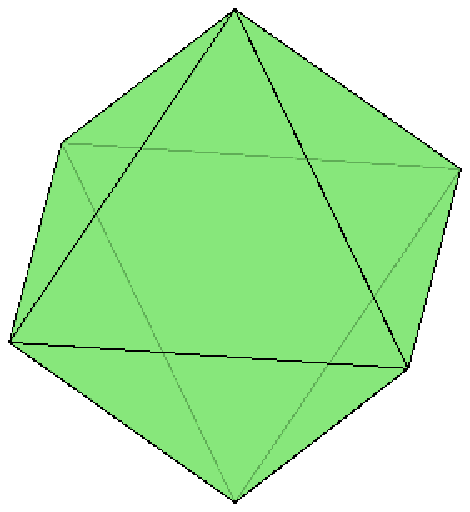}}
\subfloat[512 elements]{\label{Sb}\includegraphics[width=4.3cm]{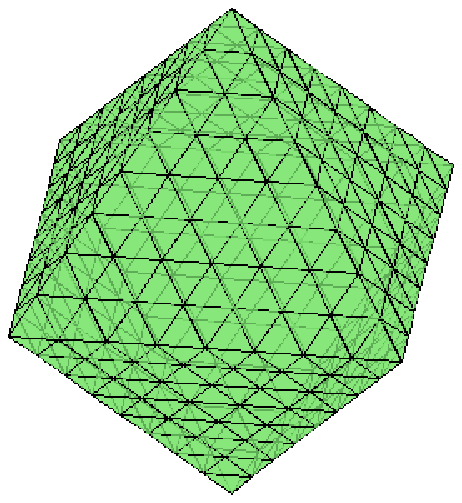}}
\subfloat[2048 elements]{\label{Sc}\includegraphics[width=4.3cm]{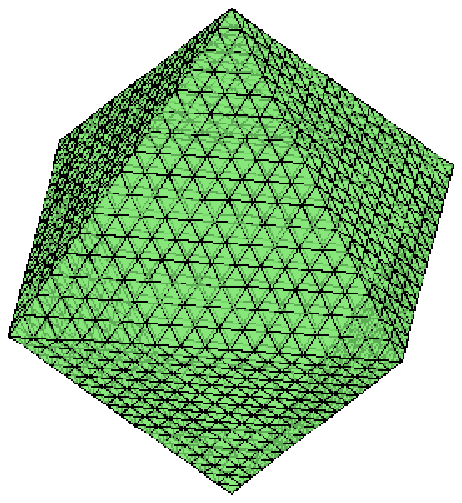}}
\caption{reference triangulations of a unit sphere $\mathbb{S}^2$ in $\mathbb{R}^3$}
\label{2sphere}
\end{figure}

\begin{figure}[tbhp]
\centering
\subfloat[8 elements]{\label{Sa1}\includegraphics[width=4.3cm]{sphere0.eps}}
\subfloat[512 elements]{\label{Sb1}\includegraphics[width=4.3cm]{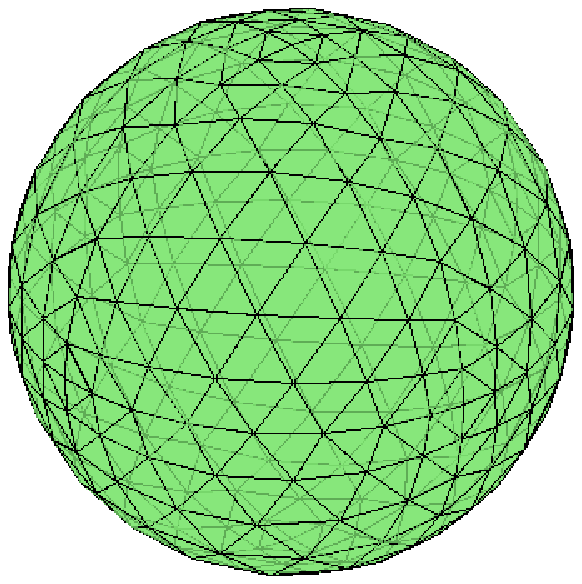}}
\subfloat[2048 elements]{\label{Sc1}\includegraphics[width=4.3cm]{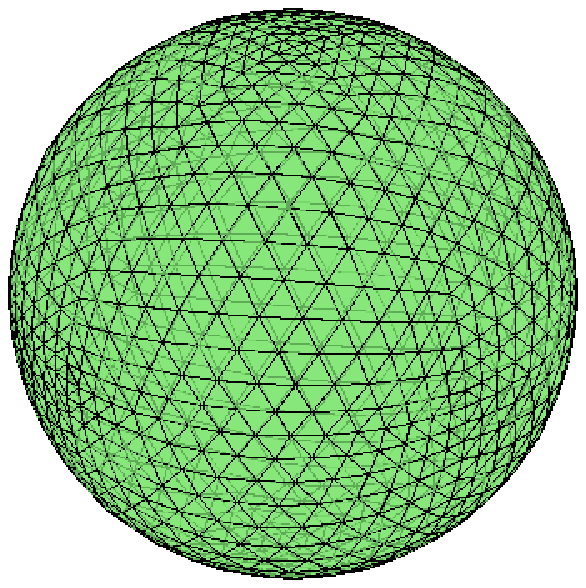}}
\caption{triangulations of a unit sphere $\mathbb{S}^2$ with vertices on $\mathbb{S}^2$}
\label{2sphere1}
\end{figure}

The grid hierarchy in \textbf{A2} is used to construct multilevel preconditioners for solving \eqref{dLB}.
Within a tubular neighborhood $\mathcal{U}\supset\mathcal{M}\cup\mathcal{M}_0,$ let $d(x)$ be the signed distance function of $\mathcal{M}$, that is, $|d(x)|=\text{dist}(x,\mathcal{M})$ $\forall x\in\mathcal{U}$. Then the unit normal $\nu$ of $\mathcal{M}$ is extended in $\mathcal{U}$ as $\nu(x)=\nabla d(x)/|\nabla d(x)|$ $\forall x\in\mathcal{U}$. The following function \begin{equation}\label{Phi}
    \Phi(x)=x-d(x)\nu(x)
\end{equation}
maps points in $\mathcal{U}$ onto $\mathcal{M}.$
In the classical literature, $\Phi$ is used to project grid vertices of the finest mesh $\widehat{\mathcal{T}}_J$ on $\mathcal{M}_0$ to $\mathcal{M}$ and construct the triangulated  surface $\mathcal{M}_h$, see, e.g., \cite{DemlowDziuk2007,BonitoCasconMekchayMorinNochetto2013,BonitoCasconMekchayMorinNochetto2016}. In this case, $\Phi_h$ in \textbf{A1} is determined by \begin{itemize}
    \item$\Phi_h(x)=\Phi(x)$ for each vertex $x$ in $\widehat{\mathcal{T}}_J$,
    \item$\Phi_h$ is linear on each element in $\widehat{\mathcal{T}}_J$.
\end{itemize}
For example, when $\mathcal{M}=\mathbb{S}^2=\{x\in\mathbb{R}^3: |x|=1\}$ is the unit sphere in $\mathbb{R}^3,$ the signed distance function is $d_{\mathbb{S}^2}=|x|-1$ and $\Phi=x/|x|$. A reference surface $\mathcal{M}_0$ for $\mathbb{S}^2$ and its partitions are shown in Fig.~\ref{2sphere}. The true meshes with vertices on $\mathbb{S}^2$ are shown in Fig.~\ref{2sphere1}. When $d(x)$ is not explicitly available, one could use local optimization processes in e.g., \cite{DemlowDziuk2007,DednerMadhavenStinner2013} to approximate $\Phi.$

We have the following transfer operator $\Pi_h: L^2(\mathcal{M}_0)\rightarrow L^2(\mathcal{M}_h)$ \begin{equation*}
    \Pi_hv=v\circ\Phi^{-1}_h.
\end{equation*}
Since $\Phi_h$ is bi-Lipschitz, $\Pi_h$ is a bijection that preserves $H^1$ regularity, i.e., \begin{equation*}
    \Pi_hH^1(\mathcal{M}_0)=H^1(\mathcal{M}_h).
\end{equation*}
The auxiliary finite element space on $\mathcal{M}_0$ is
\begin{equation*}
    \widehat{V}_h:=\Pi_h^{-1}V_h.
\end{equation*} 
In fact $\widehat{V}_h$ is just the space of globally continuous and piecewise linear polynomials on $\mathcal{M}_0$ with respect to the mesh $\widehat{\mathcal{T}}_h$. For any $v\in\widehat{V}_h$, $\hat{\tau}\in\widehat{\mathcal{T}}_h$ with $\tau=\Phi_h(\hat{\tau})$, it follows from
\textbf{A1} and \textbf{A2} that
\begin{subequations}\label{equiv}
\begin{align}
    &L_0^{-1}\|v\|_{\hat{\tau}}\leq\|\Pi_h v\|_{\tau}\leq L_0\|v\|_{\hat{\tau}},\\
    &L_1^{-1}|v|_{H^1(\hat{\tau})}\leq|\Pi_h v|_{H^1(\tau)}\leq L_1|v|_{H^1(\hat{\tau})},
\end{align}
\end{subequations}
where $L_0>0$, $L_1>0$ rely on the Lipschitz constant of $\Phi_h$. In \cite{BonitoCasconMekchayMorinNochetto2016}, the nodal interpolant $\Phi_h$ is shown to be uniformly bi-Lipschitz provided the mesh size $\widehat{\mathcal{T}}_J$ is sufficiently small. In this case, $L_0$, $L_1$ are absolute constants. Throughout the rest of this paper, we say $C_1\lesssim C_2$ provided  $C_1\leq CC_2$ with $C$ a generic constant depending only on $L_0, L_1,$ $\mathcal{M}_0,$ $\mathcal{M},$ and the shape-regularity of $\widehat{\mathcal{T}}_h.$ We say $C_1\approx C_2$ provided  $C_1\lesssim C_2$ and $C_2\lesssim C_1$.

\subsection{Preconditioners for linear nodal elements}\label{subseclinear}

We consider the linear operator
$\widehat{A}_h: \widehat{V}_h\rightarrow \widehat{V}_h^\prime$ determined by
\begin{equation*}
    \langle \widehat{A}_hv_h,w_h\rangle=(\nabla_{\mathcal{M}_0}v_h,\nabla_{\mathcal{M}_0} w_h)_{\mathcal{M}_0}+(v_h,w_h)_{\mathcal{M}_0},\quad v_h, w_h\in\widehat{V}_h.
\end{equation*}
It is noted that the auxiliary operator $\widehat{A}_h$ is always SPD,
although $A_h$ with $c=0$ is singular. The next theorem is the first main result in this paper.
\begin{theorem}\label{thm1}
Let $\widehat{B}_h: \widehat{V}^\prime_h\rightarrow\widehat{V}_h$ be a preconditioner for $\widehat{A}_h.$ Then the operator
$B_h:=\Pi_h\widehat{B}_h\Pi_h^\prime$ satisfies
\begin{equation*}
    \kappa(B_hA_h)\lesssim\kappa(\widehat{B}_h\widehat{A}_h).
\end{equation*} 
\end{theorem}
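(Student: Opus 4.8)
The plan is to apply Corollary \ref{FSP2} with $\mathcal{V}=V_h$, $\widetilde{\mathcal{V}}=\widehat{V}_h$, $\mathcal{A}=A_h^c$, $\widetilde{\mathcal{A}}=\widehat{A}_h$, $\widetilde{\mathcal{B}}=\widehat{B}_h$, and the transfer operator $\Pi=\Pi_h|_{\widehat{V}_h}\colon\widehat{V}_h\to V_h$. One first checks the structural hypotheses: $A_h^c$ is SPSD on the finite-dimensional space $V_h$ (SPD when $c=1$; when $c=0$ its kernel $N(A_h^0)$ is the constants on $\mathcal{M}_h$), so it automatically has closed range; $\widehat{A}_h$ is SPD (its zeroth-order term makes it coercive), hence $\widetilde{\mathcal{N}}=\{0\}$. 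Next, $\Pi_h$ is a bijection from $\widehat{V}_h$ onto $V_h$ by construction, so surjectivity is immediate and for the kernel-preservation condition $\Pi_h(\widetilde{\mathcal{N}})=\Pi_h(\{0\})=\{0\}\subseteq N(A_h^0)$ holds trivially. Thus $\mathcal{B}=\Pi_h\widehat{B}_h\Pi_h'$ from Corollary \ref{FSP2} is exactly the $B_h$ in the statement, and it remains only to produce the stability constants $c_0,c_1$.

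The heart of the argument is the norm equivalence
\begin{equation*}
    |\Pi_h v|_{A_h^c}^2 \approx |v|_{\widehat{A}_h}^2 \quad \text{for all } v\in\widehat{V}_h,
\end{equation*}
with constants hidden in $\lesssim$, which simultaneously delivers $c_0$ (the upper bound $|\Pi_h v|_{A_h^c}\le c_0|v|_{\widehat{A}_h}$) and, because $\Pi_h$ is a bijection, $c_1$ (given $v_h\in V_h$, take $\tilde v=\Pi_h^{-1}v_h$, so $\Pi_h\tilde v-v_h=0\in N(A_h^c)$ and $|\tilde v|_{\widehat{A}_h}\le c_1|v_h|_{A_h^c}$). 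I would prove this equivalence by summing the local estimates \eqref{equiv} over the elements: for $c=1$,
\begin{equation*}
    |\Pi_h v|_{A_h^1}^2 = \sum_{\tau\in\mathcal{T}_h}\bigl(|\Pi_h v|_{H^1(\tau)}^2 + \|\Pi_h v\|_\tau^2\bigr) \approx \sum_{\hat\tau\in\widehat{\mathcal{T}}_h}\bigl(|v|_{H^1(\hat\tau)}^2 + \|v\|_{\hat\tau}^2\bigr) = |v|_{\widehat{A}_h}^2,
\end{equation*}
using $L_0,L_1$; for $c=0$ one needs instead
\begin{equation*}
    \|\nabla_{\mathcal{M}_h}\Pi_h v\|^2 \approx \|\nabla_{\mathcal{M}_0} v\|_{\mathcal{M}_0}^2 + \|v\|_{\mathcal{M}_0}^2,
\end{equation*}
where the left side controls the right because of a Poincar\'e–Friedrichs inequality on the closed surface $\mathcal{M}_h$ applied modulo constants (the semi-norm on the left only sees $v$ up to an additive constant, but so does the right side when we pass to the quotient by $N(\widehat{A}_h)$ — wait, $\widehat{A}_h$ has the mass term so it is not quotiented; instead one uses that the constants on $\mathcal{M}_0$ map to constants on $\mathcal{M}_h$, and a scaled Poincar\'e inequality relates $\|v\|_{\mathcal{M}_0}$ to $\|\nabla_{\mathcal{M}_0}v\|_{\mathcal{M}_0}$ plus the mean of $v$, which in turn is controlled through the gradient on $\mathcal{M}_h$). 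More cleanly: on the quotient spaces $V_h/N(A_h^0)$ and $\widehat{V}_h$, one shows $|\Pi_h[v]|_{[A_h^0]}\approx|v|_{\widehat{A}_h}$ using that the $\mathcal{M}_0$-Poincar\'e inequality makes $\|\nabla_{\mathcal{M}_0}v\|_{\mathcal{M}_0}$ equivalent to $\|v\|_{H^1(\mathcal{M}_0)}$ on the complement of constants, and \eqref{equiv}(b) together with a mesh-independent comparison of $|\mathcal{M}_0|$ and $|\mathcal{M}_h|$ transfers this to $\mathcal{M}_h$.

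The main obstacle is this $c=0$ case: the operators $A_h^0$ and $\widehat{A}_h$ do not have the same kernel structure ($\widehat{A}_h$ is SPD, $A_h^0$ is not), so the clean "same bilinear form on both sides" comparison used for $c=1$ fails, and one must genuinely invoke a Poincar\'e-type inequality on $\mathcal{M}_0$ (valid since $\mathcal{M}_0$ is a fixed closed Lipschitz hypersurface, with a constant depending only on $\mathcal{M}_0$) and then push it through $\Phi_h$ using \eqref{equiv} with the bi-Lipschitz constants $L_0,L_1$. One must also be careful that the constants $\bar f_h=0$ normalization and the quotient-space setup of Section \ref{secabstract} are used consistently so that $\kappa(B_hA_h)$ genuinely means $\kappa([B_h][A_h])$ on $V_h/N(A_h^0)$. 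Once the two-sided bound $|\Pi_h v|_{A_h^c}\approx|v|_{\widehat{A}_h}$ is in hand with constants $\lesssim 1$, Corollary \ref{FSP2} gives $\kappa(B_hA_h)\le(c_0c_1)^2\kappa(\widehat{B}_h\widehat{A}_h)\lesssim\kappa(\widehat{B}_h\widehat{A}_h)$, completing the proof.
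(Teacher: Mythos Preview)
Your framework is exactly the paper's: apply Corollary~\ref{FSP2} with $\mathcal{V}=V_h$, $\widetilde{\mathcal{V}}=\widehat{V}_h$, $\mathcal{A}=A_h^c$, $\widetilde{\mathcal{A}}=\widehat{A}_h$, $\widetilde{\mathcal{B}}=\widehat{B}_h$, $\Pi=\Pi_h$. The kernel check and the $c_0$ bound (upper bound in \eqref{equiv}) are fine, and for $c=1$ your choice $\tilde v=\Pi_h^{-1}v_h$ works.

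The gap is in the $c=0$ case, and you already sense it. The two-sided equivalence $|\Pi_h v|_{A_h^0}\approx\|v\|_{\widehat{A}_h}$ that you try to establish is simply \emph{false}: take $v$ a nonzero constant on $\mathcal{M}_0$, then the left side vanishes and the right side does not. No amount of Poincar\'e reasoning will rescue a full norm equivalence here, because $\widehat{A}_h$ is SPD while $A_h^0$ is only SPSD. Your attempt to patch this via quotient-space language (``$|\Pi_h[v]|_{[A_h^0]}\approx|v|_{\widehat{A}_h}$'') does not type-check: the $\widehat{V}_h$ side is not quotiented, so you cannot freely adjust $v$ by a constant there.

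The fix---and this is exactly what the paper does---is to exploit the slack in the third hypothesis of Corollary~\ref{FSP2}: you only need $\Pi_h\tilde v-v_h\in N(A_h^0)$, not $\Pi_h\tilde v=v_h$. So for $c=0$ take
\[
\tilde v=\Pi_h^{-1}v_h-\overline{\Pi_h^{-1}v_h},
\]
the mean-zero representative on $\mathcal{M}_0$. Since $\Pi_h$ maps constants to constants, $\Pi_h\tilde v-v_h$ is a constant on $\mathcal{M}_h$, hence lies in $N(A_h^0)$. Now the Poincar\'e inequality on the fixed surface $\mathcal{M}_0$ (applied to the mean-zero $\tilde v$) gives $\|\tilde v\|_{\mathcal{M}_0}\lesssim\|\nabla_{\mathcal{M}_0}\tilde v\|_{\mathcal{M}_0}$, and together with \eqref{equiv}(b) one obtains
\[
\|\tilde v\|_{\widehat{A}_h}\lesssim\|\nabla_{\mathcal{M}_0}\tilde v\|_{\mathcal{M}_0}=\|\nabla_{\mathcal{M}_0}\Pi_h^{-1}v_h\|_{\mathcal{M}_0}\lesssim\|\nabla_{\mathcal{M}_h}v_h\|=|v_h|_{A_h^0}.
\]
That is the clean $c_1$ bound you were looking for; with it, Corollary~\ref{FSP2} closes the proof immediately.
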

\begin{proof}
It suffices to check the three assumptions in Corollary \ref{FSP2} with $\mathcal{V}=V_h$, $\widetilde{\mathcal{V}}=\widehat{V}_h$, $\widetilde{\mathcal{B}}=\widehat{B}_h$, $\mathcal{A}=A_h$, $\widetilde{\mathcal{A}}=\widehat{A}_h$. The first assumption is verified by $N(\widehat{A}_h)=\{0\}$ and $\Pi_h(N(\widehat{A}_h))\subseteq N(A_h).$ The second assumption follows from \eqref{equiv}. Given $v\in V_h$, we define
\begin{equation*}
   \tilde{v}= \left\{\begin{aligned}
    &\Pi_h^{-1}v,\quad \text{ if }c=1,\\
    &\Pi_h^{-1}v-\overline{\Pi_h^{-1}v},\quad\text{ if } c=0,
\end{aligned}\right.
\end{equation*}
where $\overline{\Pi_h^{-1}v}$ is the average of $\Pi_h^{-1}v$ on $\mathcal{M}_h$. In either case, we have $\Pi_h\tilde{v}-v\in N(A_h)$ and  $\|\tilde{v}\|_{\widehat{A}_h}\lesssim|v|_{A_h}$ by \eqref{equiv} (and the Poincar\'e inequality on $\mathcal{M}_0$ when $c=0$). Therefore the third assumption in Corollary \ref{FSP2} is confirmed. 
\end{proof}

In view of Theorem \ref{thm1}, it remains to construct a multilevel preconditioner $\widehat{B}_h$ for $\widehat{A}_h$ on the reference hypersurface $\mathcal{M}_0$. To that end, we consider the nested spaces 
\begin{equation}\label{nestedV}
    \widehat{V}_0\subset \widehat{V}_1\subset \widehat{V}_2\subset\cdots\subset\widehat{V}_J=\widehat{V}_h,
\end{equation}
where $\widehat{V}_j$ is the space of continuous and piecewise linear polynomials with respect to $\widehat{\mathcal{T}}_j$ in \textbf{A1}. Recall that $\widehat{A}_h$ is a SPD operator from conforming linear nodal elements on a fixed and piecewise flat surface $\mathcal{M}_0$. Therefore optimal additive and multiplicative preconditioners $\widehat{B}_h$ of $\widehat{A}_h$ directly follow from \eqref{nestedV} and the classical geometric multilevel theory, see, e.g., \cite{GriebelOswald1995,Hackbusch1985,Xu1992,XuZikatanov2002}. To be precise, we next briefly describe the construction of the multilevel preconditioner $\widehat{B}_h$ on $\mathcal{M}_0$ in the framework of subspace correction methods (cf.~\cite{Xu1992,XuZikatanov2002}). Let $\widehat{V}_h$ be decomposed as 
\begin{equation}\label{VhVk}
    \widehat{V}_h=\sum_{\lambda\in\Lambda}\widehat{V}_\lambda,
\end{equation}
where $\{\widehat{V}_\lambda\}$ are subspaces of $\widehat{V}_h$ and $\Lambda=\{\lambda_k\}_{k=1}^K$ is a finite index set.
Let $I_\lambda: \widehat{V}_\lambda\rightarrow\widehat{V}_h$ be the inclusion, $\widehat{Q}_\lambda=\widehat{I}_\lambda^\prime: \widehat{V}^\prime_\lambda\rightarrow\widehat{V}^\prime_h$, and $\widehat{A}_\lambda=\widehat{Q}_\lambda \widehat{A}_h\widehat{I}_\lambda$. Given a fixed initial guess in $\widehat{V}_h$, the multiplicative preconditioner $\widehat{B}^m_h: \widehat{V}_h^\prime\rightarrow \widehat{V}_h$  for $\widehat{A}_h$ is a successive subspace correction method given in Algorithm \ref{wowo}.

\begin{algorithm}[H]
\caption{Symmetrized Successive Subspace Correction $\widehat{B}_h^m$}\label{wowo}

Input: $u_0\in\widehat{V}_h$ and $g\in\widehat{V}^\prime_h$.

\textbf{For} {$k=1:K$}

$\qquad u_k=u_{k-1}+\widehat{A}_{\lambda_k}^{-1}\widehat{Q}_{\lambda_k}(g-\widehat{A}_hu_{k-1})$;

\textbf{EndFor}

\textbf{For} {$k=K+1:2K$}

$\qquad u_k=u_{k-1}+\widehat{A}_{\lambda_{2K+1-k}}^{-1}\widehat{Q}_{\lambda_{2K+1-k}}(g-\widehat{A}_hu_{k-1})$;

\textbf{EndFor}

Output: $\widehat{B}_h^mg:=u_{2K}.$
\end{algorithm}
Based on the decomposition \eqref{VhVk}, the additive preconditioner $\widehat{B}_h^a$ for $\widehat{A}_h$ reads
\begin{equation}\label{Bhahat}
    \widehat{B}_h^a=\sum_{\lambda\in\Lambda}\widehat{A}_\lambda^{-1}\widehat{Q}_\lambda.
\end{equation}

For a \emph{quasi-uniform} mesh sequence $\{\widehat{\mathcal{T}}_j\}_{j=0}^J$, each element $\hat{\tau}$ in $\widehat{\mathcal{T}}_j$ satisfies $\text{diam}(\hat{\tau})$ $\approx2^{-j}$, $0\leq j\leq J$. Let $\{z_{j,l}\}_{l=1}^{n_j}$ denote the set of vertices in $\widehat{\mathcal{T}}_j,$ $\phi_{j,l}$ the hat basis function of $\widehat{V}_j$ at $z_{j,l}$, and $\widehat{V}_{j,l}=\text{span}\{\phi_{j,l}\}.$ Based on the multilevel subspace decomposition  \begin{equation}\label{mguniform}
    \widehat{V}_h=\sum_{\lambda\in\Lambda}\widehat{V}_\lambda:=\sum_{j=1}^J\sum_{l=1}^{n_j}\widehat{V}_{j,l}+\widehat{V}_0,
\end{equation}
we obtain multilevel preconditioners $\widehat{B}_h^m$ and $\widehat{B}_h^a$ for $\widehat{A}_h$ given in Algorithm \ref{wowo} and \eqref{Bhahat}.
The abstract theory in \cite{XuZikatanov2002} implies
\begin{equation}\label{Bhambound}
    \kappa(\widehat{B}_h^a\widehat{A}_h)\lesssim1,\quad \kappa(\widehat{B}_h^m\widehat{A}_h)\lesssim1,
\end{equation}
see also \cite{ChenNochettoXu2012} for details. From an algorithmic viewpoint, $\widehat{B}_h^m$ corresponds to the multigrid V-cycle with Gauss--Seidel smoothers, and $\widehat{B}_h^a$ leads to a BPX-type multilevel preconditioner. 

\begin{remark}
In the above analysis, the initial mesh size of $\widehat{\mathcal{T}}_0$ is not required to be sufficiently small. Therefore the proposed  preconditioners avoid the assumption on the smallness of the coarsest grid size used in \cite{BonitoPasciak2012}.
\end{remark}

For surface FEMs driven by a posteriori error estimators (cf.~\cite{DemlowDziuk2007,BonitoCasconMekchayMorinNochetto2016}), the adaptively generated sequence $\{\widehat{\mathcal{T}}_j\}_{j=0}^J$ is not quasi-uniform in general. In this case, the decomposition \eqref{mguniform} yields non-optimal preconditioners due to inefficient smoothing at each level. To maintain optimal complexity, one could use local multilevel methods, smoothing only at newly added vertices and part of their neighbors on each level, see, e.g.,  \cite{WuChen2006,GrasedyckWangXu2016,Yserentant1993,DahmenKunoth1992,AksoyluHolst2006}. Alternatively, the work \cite{ChenNochettoXu2012} uses a special mesh coarsening algorithm to construct nested grid sequences and optimal multilevel preconditioners with provable condition number bound \eqref{Bhambound} on graded bisection grids.

\section{Nonconforming and DG methods}\label{secNCDG}
In this section, we develop preconditioners for nonconforming linear  and DG discretizations of \eqref{LB}. 
The DG space on $\mathcal{M}_h$ is defined as
\begin{align*}
    \mathbb{V}_h:=\big\{v_h\in L^2(\mathcal{M}_h): v_h|_\tau\in\mathcal{P}_1(\tau)~\forall \tau\in\mathcal{T}_h\big\}.
\end{align*}
Let $\mathcal{E}_h$ denote the set of $(d-1)$-dimensional simplexes in $\mathcal{T}_h$, and $\mathcal{N}_h$ be the set of vertices in $\mathcal{T}_h$. 
For each $e\in\mathcal{E}_h$, let $\tau_e^+, \tau_e^-\in\mathcal{T}_h$ be the two elements sharing $e$, and $\omega_e=\tau_e^+\cup\tau_e^-$. The outward unit \emph{conormal} $\nu_e^+$ (resp.~$\nu_e^+$) to $e\subset\partial\tau_e^+$ (resp.~$e\subset\partial\tau_e^-$) is a unit vector parallel to $\tau_e^+$ (resp.~$\tau_e^-$) and normal to $e$. By $\nabla_h$ we denote the broken surface gradient on $\mathcal{M}_h$ such that $(\nabla_hv_h)|_\tau=\nabla_\tau(v_h|_\tau)$ $\forall\tau\in\mathcal{T}_h.$ For $v_h, w_h\in\mathbb{V}_h$, let
\begin{align*}
    \llbracket v_h\rrbracket|_e&:=v_h|_{\tau_e^+}-v_h|_{\tau_e^-},\\
    \{\nabla_hv_h\cdot\nu_h\}|_e&:=\frac{1}{2}\big(\nabla_hv_h|_{\tau_e^+}\cdot\nu_e^+-\nabla_hv_h|_{\tau_e^-}\cdot\nu_e^-\big),\\
    (\nabla_hv_h,\nabla_hw_h)_{\mathcal{M}_h}&:=\sum_{\tau\in\mathcal{T}_h}(\nabla_\tau v_h,\nabla_\tau w_h)_\tau.
\end{align*}
Given a subset $\mathcal{E}\subseteq\mathcal{E}_h$, we define
\begin{equation*}
    \langle \xi,\eta\rangle_\mathcal{E}:=\sum_{e\in\mathcal{E}}\int_e\xi\eta d\mu_e,
\end{equation*}
with $d\mu_e$ the $(d-1)$-Lebesgue measure on $e.$ 
Let $\|\cdot\|_\mathcal{E}$ denote the $L^2$ norm induced by $\langle \cdot,\cdot\rangle_\mathcal{E},$ $Q_h$ the $L^2$ projection onto $\prod_{e\in\mathcal{E}_h}\mathcal{P}_0(e),$ and $h_\mathcal{T},$ $h_\mathcal{E}$ the mesh size functions such that \begin{align*}
    &h_\mathcal{T}|_\tau=h_\tau=\text{diam}(\tau),\quad\forall\tau\in\mathcal{T}_h,\\
    &h_\mathcal{E}|_e=h_e=\text{diam}(e),\quad\forall e\in\mathcal{E}_h.  
\end{align*}
We make use of the following bilinear forms $a_h^{\text{CR}}$, $a_h^{\text{DG}}$ and linear operators $\mathcal{A}_h=\mathcal{A}_h^c: \mathcal{V}_h\rightarrow\mathcal{V}_h^\prime$, $\mathbb{A}_h=\mathbb{A}_h^c: \mathbb{V}_h\rightarrow\mathbb{V}_h^\prime$
\begin{align*}
    &a_h^{\text{CR}}(v_h,w_h)=\langle\mathcal{A}^c_hv_h,w_h\rangle=(\nabla_h v_h,\nabla_h w_h)_{\mathcal{M}_h}+c(v_h,w_h),\\
    &a_h^{\text{DG}}(v_h,w_h)=\langle\mathbb{A}^c_hv_h,w_h\rangle=(\nabla_h v_h,\nabla_h w_h)_{\mathcal{M}_h}-\big\langle\{\nabla_hv_h\cdot\nu_h\},\lr{w_h}\big\rangle_{\mathcal{E}_h}\\
    &\quad-\big\langle\{\nabla_hw_h\cdot\nu_h\},\lr{v_h}\big\rangle_{\mathcal{E}_h}+\alpha\big\langle h_\mathcal{E}^{-1}Q_h\lr{v_h},Q_h\lr{w_h}\big\rangle_{\mathcal{E}_h}+c(v_h,w_h),
\end{align*}
where $\alpha>0$ is a constant.

The nonconforming CR element method for \eqref{LB} seeks $u^{\text{CR}}_h\in\mathcal{V}_h$ such that
\begin{equation}\label{CRLB}
		a_h^{\text{CR}}(u^{\text{CR}}_h,v_h)=(f_h,v_h)_{\mathcal{M}_h},\quad     v_h\in \mathcal{V}_h.
\end{equation}
Here the CR element space $\mathcal{V}_h$ on $\mathcal{M}_h$ is given by
\begin{align*}
    \mathcal{V}_h:=\big\{v_h\in \mathbb{V}_h: Q_h\lr{v_h}=0\big\}.
\end{align*}
For \eqref{CRLB}, the work \cite{Guo2020} derives a priori error estimate and superconvergent gradient recovery technique.
The DG method for \eqref{LB} is to find $u^{\text{dG}}_h\in\mathbb{V}_h$
\begin{equation}\label{DGLB}
		a_h^{\text{DG}}(u^{\text{DG}}_h,v_h)=(f_h,v_h)_{\mathcal{M}_h},\quad     v_h\in \mathbb{V}_h.
\end{equation}
Assuming that $\alpha$ is sufficiently large, \eqref{DGLB} is proposed and analyzed in \cite{DednerMadhavenStinner2013,AntoniettiDednerMadhavenStangalinoStinnerBjornVerani2015}.

In some situations, the global assumptions \textbf{A1} and \textbf{A2} might be demanding. Now we propose several local assumptions on local mesh quality of $\mathcal{T}_h$ as follows.
\begin{itemize}
    \item[\textbf{A3.}] The triangulation $\mathcal{T}_h$ is shape regular, i.e., there exists an absolute constant $\gamma_0>0$ such that $r_T/\rho_T\leq\gamma_0$ $\forall T\in\mathcal{T}_h$, where $r_T$, $\rho_T$ are radii of circumscribed and inscribed spheres of $T.$
    \item[\textbf{A4.}] For each vertex $z\in\mathcal{N}_h$, let $N_z$ denote the number of elements sharing $z$ in $\mathcal{T}_h$. There exists an absolute integer $N_0$ such that $N_z\leq N_0$ for all $z$.
    \item[\textbf{A5.}] Let $\hat{\omega}=\hat{\tau}^+\cup\hat{\tau}^-$ be the union of two simplexes $\hat{\tau}^+$, $\hat{\tau}^-$ sharing a $(d-1)$-dimensional face in $\mathbb{R}^d$. There exists an absolute constant $L>0$ and a parametrization $\varphi_e: \hat{\omega}\rightarrow\mathbb{R}^d$ of $\omega_e$ such that
\begin{align*}
    &\varphi_e(\hat{\omega})=\omega_e,\quad\varphi_e|_{\tau^+},~\varphi_e|_{\tau^-}\text{ are affine},\\
	&L^{-1}|\hat{x}-\hat{y}|\leq|\varphi_e(\hat{x})-\varphi_e(\hat{y})|\leq L|\hat{x}-\hat{y}|,\quad\forall\hat{x}, \hat{y}\in\hat{\omega}.
\end{align*}
\end{itemize}
Here \textbf{A3}--\textbf{A5} are local and weaker than \textbf{A1} and \textbf{A2}. 

In addition, the following Poincar\'e inequality 
\begin{equation}\label{Poincare}
    \|v-\bar{v}\|\leq c_P\|\nabla_{\mathcal{M}_h}v\|,\quad\forall v\in H^1(\mathcal{M}_h)
\end{equation}
is useful in the semi-definite case $c=0,$ where $\bar{v}$ is the average of $v$ on $\mathcal{M}_h.$
We say $C_1\preccurlyeq C_2$ provided  $C_1\leq CC_2$ with $C$ being a generic constant depending only on $L$, $N_0$, $\gamma_0,$ $c_P$. We say $C_1\simeq C_2$ provided  $C_1\preccurlyeq C_2$ and $C_2\preccurlyeq C_1$.

For $z\in\mathcal{N}_h$, let $\omega_z$ be the union of elements in $\mathcal{T}_h$ sharing $z$ as a vertex. 
To derive auxiliary space preconditioners for $\mathcal{A}_h$ and $\mathbb{A}_h$, we need the following nodal averaging process $I^{\text{av}}_h: \mathbb{V}_h\rightarrow V_h$ given by
\begin{equation*}
    (I^{\text{av}}_hv_h)(z):=\frac{1}{N_z}\sum _{\tau\in\mathcal{T}_h,\tau\subset\omega_z}v_h|_\tau(z),\quad\forall v_h\in\mathbb{V}_h,~\forall z\in\mathcal{N}_h.
\end{equation*}
The next lemma discusses the stability and approximation property of $I^{\text{av}}_h$.
\begin{lemma}\label{approxI}
For any $v_h\in\mathbb{V}_h,$ it holds that
\begin{subequations}
\begin{align}
    \|I^{\emph{av}}_hv_h\|&\preccurlyeq\|v_h\|,\\
    \|\nabla_{\mathcal{M}_h}I^{\emph{av}}_hv_h\|&\preccurlyeq\|\nabla_hv_h\|+\|h_\mathcal{E}^{-\frac{1}{2}}Q_h\lr{v_h}\|_{\mathcal{E}_h},\label{Ih1}\\
    \|h_\mathcal{T}^{-1}(v_h-I^{\emph{av}}_hv_h)\|&\preccurlyeq\|\nabla_hv_h\|+\|h_\mathcal{E}^{-\frac{1}{2}}Q_h\lr{v_h}\|_{\mathcal{E}_h}.\label{Ih2}
\end{align}
\end{subequations}
\end{lemma}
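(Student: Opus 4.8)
I would exploit the fact that every face of $\mathcal{M}_h$ is a \emph{flat} $d$-simplex, so that all element-level inequalities are just scaled Euclidean estimates on $\mathcal{P}_1$; the surface enters only through the bi-Lipschitz edge charts of \textbf{A5} (used to compare data on the two sides of a common face) and the finite overlap guaranteed by \textbf{A4}. First I would record, for a shape-regular flat $d$-simplex $\tau$ with $h_\tau=\mathrm{diam}(\tau)$ and $|\tau|\simeq h_\tau^d$: for $p\in\mathcal{P}_1(\tau)$, $\|p\|_\tau^2\simeq h_\tau^d\sum_z|p(z)|^2$ (sum over vertices of $\tau$), $\|p\|_{L^\infty(\tau)}\simeq h_\tau^{-d/2}\|p\|_\tau$, and $\|\nabla_\tau p\|_\tau\simeq h_\tau^{d/2}|\nabla_\tau p|\preccurlyeq h_\tau^{-1}\|p\|_\tau$; likewise on a $(d-1)$-face $e$, $\|q\|_{L^\infty(e)}\simeq h_e^{-(d-1)/2}\|q\|_{L^2(e)}$ for $q\in\mathcal{P}_1(e)$. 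By \textbf{A3} two simplices sharing a face have comparable diameters, and chaining through at most $N_0$ such neighbours inside a vertex star $\omega_z$ (\textbf{A4}) yields $h_\tau\simeq h_{\tau'}\simeq h_e$ whenever $\tau,\tau'\subset\omega_z$ and $e\ni z$. With these in hand, the heart of the matter is \eqref{Ih2}; the first bound and \eqref{Ih1} will follow cheaply, the latter because on each $\tau$ both $v_h$ and $I^{\text{av}}_hv_h$ restrict to $\mathcal{P}_1(\tau)$, so $\nabla_h(v_h-I^{\text{av}}_hv_h)$ is piecewise constant and the inverse estimate gives $\|\nabla_h(v_h-I^{\text{av}}_hv_h)\|\preccurlyeq\|h_\mathcal{T}^{-1}(v_h-I^{\text{av}}_hv_h)\|$, whence \eqref{Ih1} via the triangle inequality and \eqref{Ih2}.

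For \eqref{Ih2} the key is an algebraic identity. Fix $\tau\in\mathcal{T}_h$ and a vertex $z$ of $\tau$. Since $\mathcal{M}_h$ is a triangulated manifold, the elements of $\omega_z$ are connected through faces containing $z$, so any $\tau'\subset\omega_z$ joins to $\tau$ by a chain $\tau=\sigma_0,\dots,\sigma_m=\tau'$ with $m\le N_z\le N_0$ and $\sigma_{i-1},\sigma_i$ sharing a face $e_i\ni z$; telescoping gives $v_h|_{\tau'}(z)-v_h|_\tau(z)=\pm\sum_{i=1}^m\lr{v_h}|_{e_i}(z)$, so
\[
 \big|(I^{\text{av}}_hv_h)(z)-v_h|_\tau(z)\big|\le\frac1{N_z}\sum_{\tau'\subset\omega_z}\big|v_h|_{\tau'}(z)-v_h|_\tau(z)\big|\preccurlyeq\sum_{e\in\mathcal{E}_h,\,e\ni z}\big|\lr{v_h}|_e(z)\big|.
\]
Then I bound the nodal jump by edge norms: $\lr{v_h}|_e=Q_h\lr{v_h}|_e+(\lr{v_h}-Q_h\lr{v_h})|_e$; the first term is a constant of size $h_e^{-(d-1)/2}\|Q_h\lr{v_h}\|_{L^2(e)}$, while the second is the oscillation of the affine function $\lr{v_h}|_e$, whose tangential gradient along $e$ equals $\Pi_e\nabla_{\tau_e^+}v_h-\Pi_e\nabla_{\tau_e^-}v_h$ (the tangent space of $e$ is common to $\tau_e^\pm$), hence is $\preccurlyeq h_e\big(|\nabla_{\tau_e^+}v_h|+|\nabla_{\tau_e^-}v_h|\big)\preccurlyeq h_e^{1-d/2}\|\nabla_hv_h\|_{L^2(\omega_e)}$ in $L^\infty(e)$. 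Thus $|\lr{v_h}|_e(z)|\preccurlyeq h_e^{-(d-1)/2}\|Q_h\lr{v_h}\|_{L^2(e)}+h_e^{1-d/2}\|\nabla_hv_h\|_{L^2(\omega_e)}$. Inserting this in $\|v_h-I^{\text{av}}_hv_h\|_\tau^2\simeq h_\tau^d\sum_{z}|(v_h|_\tau-I^{\text{av}}_hv_h)(z)|^2$, multiplying by $h_\tau^{-2}$, using $h_\tau\simeq h_e$ so that $h_\tau^{d-2}h_e^{-(d-1)}\simeq h_e^{-1}$ and $h_\tau^{d-2}h_e^{2-d}\simeq1$, Cauchy–Schwarz over the $\preccurlyeq1$ edges at $z$, and finally summing over $\tau\in\mathcal{T}_h$ with the finite overlap of \textbf{A4}, produces precisely \eqref{Ih2}. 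The first bound $\|I^{\text{av}}_hv_h\|\preccurlyeq\|v_h\|$ is even easier: $\|I^{\text{av}}_hv_h\|^2\simeq\sum_\tau h_\tau^d\sum_z|(I^{\text{av}}_hv_h)(z)|^2$, and $|(I^{\text{av}}_hv_h)(z)|^2\le N_z^{-1}\sum_{\tau'\subset\omega_z}|v_h|_{\tau'}(z)|^2\preccurlyeq\sum_{\tau'\subset\omega_z}h_{\tau'}^{-d}\|v_h\|_{\tau'}^2$ by Cauchy–Schwarz and the $L^\infty$–$L^2$ inverse estimate; summing with $h_\tau\simeq h_{\tau'}$ and finite overlap closes it.

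The main obstacle is the jump–oscillation estimate together with the bookkeeping of the powers of $h_e$ and $h_\tau$: one must not let an uncontrolled term $\|\lr{v_h}\|_{\mathcal{E}_h}$ survive, and it is exactly the $L^\infty$-bound on the tangential gradient of the affine jump — taken in the frame shared by the two adjacent simplices, which is where the flatness of the faces and, in the non-affine-chart situation, the charts of \textbf{A5} matter — that converts it into a gradient term on the right-hand side. A secondary, routine point is justifying that each star $\omega_z$ is face-connected with chain length $\le N_0$, which rests on $\mathcal{M}_h$ being a triangulated manifold and on \textbf{A3}--\textbf{A4} for the uniformity of the resulting constants.
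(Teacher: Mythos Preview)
Your proof is correct and follows essentially the same strategy as the paper: the identity expressing $(v_h-I_h^{\text{av}}v_h)(z)$ as an average of chained face jumps at $z$, the nodal jump estimate $|\lr{v_h}|_e(z)|\preccurlyeq h_e^{(1-d)/2}\|Q_h\lr{v_h}\|_e+h_e^{1-d/2}\|\nabla_hv_h\|_{\omega_e}$, local norm equivalence on $\mathcal{P}_1$, and summation with the finite overlap from \textbf{A4}. The only notable difference is that the paper obtains the nodal jump estimate by pulling $\omega_e$ back to the flat patch $\hat\omega$ via the bi-Lipschitz chart of \textbf{A5} and scaling there, whereas your mean--oscillation split on $e$ (using that the tangent space of $e$ sits in both adjacent faces) reaches the same bound more directly and makes \textbf{A5} essentially superfluous for this step.
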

\begin{proof}
For each $\tau\in\mathcal{T}_h$, let $\mathcal{N}_\tau$ be the set of vertices of $\tau.$ For $z\in\mathcal{N}_\tau$,
\begin{equation}\label{jump1}
    (v_h-I^{\text{av}}_hv_h)(z)=\frac{1}{N_z}\sum _{\tilde{\tau}\in\mathcal{T}_h,\tilde{\tau}\subset\omega_z}\big(v_h|_\tau(z)-v_h|_{\tilde{\tau}}(z)\big).
\end{equation}
If $\tilde{\tau}\in\mathcal{T}_h$ is an element in $\omega_z$ that shares a $(d-1)$-simplex $e$ with $\tau,$ then using the parametrization $\varphi_e$ of $\omega_e$ in \textbf{A5} and a scaling argument, we  obtain
\begin{equation}\label{vtautauprime}
    \big|v_h|_\tau(z)-v_h|_{\tilde{\tau}}(z)\big|\preccurlyeq h_e^{1-\frac{d}{2}}\|\nabla_hv_h\|_{\omega_e}+ h_e^{\frac{1}{2}-\frac{d}{2}}\|Q_h\lr{v_h}\|_e.
\end{equation}
In general, for any $\tilde{\tau}\in\mathcal{T}_h$ contained in $\omega_z$, we still arrive at \eqref{vtautauprime} by applying \eqref{vtautauprime} to a chain of pairs of adjacent simplexes in $\omega_z$,  see, e.g., \cite{KarakashianPascal2003}. Let $\mathcal{E}_\tau\subset\mathcal{E}_h$ denote the set of $(d-1)$-simplexes containing at least one vertex of $\tau,$ and $\omega_\tau$ the union of elements sharing at least one vertex with $\tau$ in $\mathcal{T}_h.$
It then follows from finite-dimensional norm equivalence, \textbf{A3}, and \eqref{jump1}, \eqref{vtautauprime} that
\begin{equation}\label{local1}
h_\tau^{-2}\|v_h-I^{\text{av}}_hv_h\|^2_\tau\simeq h_\tau^{d-2}\sum_{z\in\mathcal{N}_\tau}(v_h-I^{\text{av}}_hv_h)(z)^2\preccurlyeq\|\nabla_hv_h\|^2_{\omega_\tau}+\|h_\mathcal{E}^{-1}Q_h\lr{v_h}\|_{\mathcal{E}_\tau}^2.
\end{equation}
Similar local argument leads to 
\begin{equation}\label{local2}
\begin{aligned}
    \|\nabla_{\mathcal{M}_h}I^{\text{av}}_hv_h\|^2_\tau&\preccurlyeq\|\nabla_hv_h\|^2_{\omega_\tau}+\|h_\mathcal{E}^{-1}Q_h\lr{v_h}\|_{\mathcal{E}_\tau}^2,\\
    \|I^{\text{av}}_hv_h\|^2_\tau&\preccurlyeq\|v_h\|^2_{\omega_\tau}.
    \end{aligned}
\end{equation}
Summing \eqref{local1}, \eqref{local2} over $\tau\in\mathcal{T}_h$ and using \textbf{A4} complete the proof.
\end{proof}
Combining Lemma \ref{approxI} and \eqref{Poincare} with the triangle inequality
\begin{equation*}
    \|v_h-\bar{v}_h\|\leq\|v_h-I_h^{\text{av}}v_h\|+\|I_h^{\text{av}}v_h-\overline{I_h^{\text{av}}v_h}\|+\|\overline{I_h^{\text{av}}v_h-v_h}\|,
\end{equation*}
we obtain a discrete Poincar\'e inequality
\begin{equation}\label{disPoincare}
    \|v_h-\bar{v}_h\|\preccurlyeq\|\nabla_hv_h\|+\|h_\mathcal{E}^{-1}Q_h\lr{v_h}\|_{\mathcal{E}_h},\quad\forall v_h\in\mathbb{V}_h
\end{equation}

\subsection{Preconditioners for the CR method}\label{subsecCR}
Now we are in a position to present the auxiliary space preconditioner for the CR element method \eqref{CRLB}. 
\begin{theorem}\label{CRpreconditioner}
Let $\mathcal{S}_h: \mathcal{V}_h^\prime\rightarrow\mathcal{V}_h$ be SPD and satisfy 
\begin{equation}\label{ShCR}
      \|v_h\|_{\mathcal{S}_h^{-1}}\simeq\|h_\mathcal{T}^{-1}v_h\|,\quad\forall v_h\in \mathcal{V}_h.
\end{equation}
Let $\mathcal{I}_h: V_h\rightarrow\mathcal{V}_h$ be the inclusion,  $\mathcal{Q}_h=\mathcal{I}_h^\prime,$ and $B_h: V^\prime_h\rightarrow V_h$  a preconditioner for $A^1_h.$ Then $\mathcal{B}^a_h=\mathcal{S}_h+B_h\mathcal{Q}_h: \mathcal{V}_h^\prime\rightarrow\mathcal{V}_h$ 
is a preconditioner for $\mathcal{A}_h$ such that
\begin{equation*}
    \kappa(\mathcal{B}^a_h\mathcal{A}_h)\preccurlyeq\kappa(B_hA^1_h).
\end{equation*}
\end{theorem}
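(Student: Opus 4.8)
The plan is to verify the three hypotheses of the additive two-level preconditioner lemma (Lemma~\ref{ASP}) with $\mathcal{V}=\mathcal{V}_h$, $V=V_h$, $\mathcal{A}=\mathcal{A}^1_h$, $A=A^1_h$, $\mathcal{S}=\mathcal{S}_h$, $B=B_h$, and the averaging map $P=I^{\text{av}}_h$. Since the lemma is stated for SPD operators, I would first dispose of the semi-definite case $c=0$ (if it is to be covered) by quotienting out constants exactly as in the proof of Theorem~\ref{thm1} and Corollary~\ref{FSP2}; on the quotient spaces the Poincar\'e inequality \eqref{Poincare} and its discrete analogue \eqref{disPoincare} make everything positive definite, and the statement for $c=1$ then follows verbatim with $\mathcal{A}^1_h$, $A^1_h$ already SPD. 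So the heart of the matter is the three norm bounds.

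First, the containment bound $\|v\|_{\mathcal{A}^1_h}\le c_0\|v\|_{A^1_h}$ for $v\in V_h$: for a conforming $v\in V_h$ one has $\nabla_h v=\nabla_{\mathcal{M}_h}v$ and the jump terms vanish, so in fact $\|v\|_{\mathcal{A}^1_h}=\|v\|_{A^1_h}$ and $c_0=1$. Second, the smoother-norm bound $\|v_h\|_{\mathcal{A}^1_h}\le c_s\|v_h\|_{\mathcal{S}_h^{-1}}$: using \eqref{ShCR} this amounts to $\|\nabla_h v_h\|^2+\|v_h\|^2\preccurlyeq\|h_\mathcal{T}^{-1}v_h\|^2$ for $v_h\in\mathcal{V}_h$, which is the standard inverse inequality on each shape-regular simplex (A3), noting $h_\tau\preccurlyeq 1$ since $\mathcal{M}_h$ has bounded size. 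Third, the stability bound $\|I^{\text{av}}_h v_h\|_{A^1_h}\le c_1\|v_h\|_{\mathcal{A}^1_h}$ and the approximation bound $\|v_h-I^{\text{av}}_h v_h\|_{\mathcal{S}_h^{-1}}\le c_2\|v_h\|_{\mathcal{A}^1_h}$ are exactly where Lemma~\ref{approxI} does the work: for $v_h\in\mathcal{V}_h$ we have $Q_h\lr{v_h}=0$, so the edge terms on the right-hand sides of \eqref{Ih1} and \eqref{Ih2} drop out, giving $\|\nabla_{\mathcal{M}_h}I^{\text{av}}_h v_h\|\preccurlyeq\|\nabla_h v_h\|$ and $\|h_\mathcal{T}^{-1}(v_h-I^{\text{av}}_h v_h)\|\preccurlyeq\|\nabla_h v_h\|$; combined with the first inequality of Lemma~\ref{approxI} for the zeroth-order term and with \eqref{ShCR} to rewrite the $\mathcal{S}_h^{-1}$-norm as the weighted $L^2$-norm, both bounds follow with constants depending only on $L,N_0,\gamma_0,c_P$.

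Assembling, Lemma~\ref{ASP} gives $\kappa(\mathcal{B}^a_h\mathcal{A}^1_h)\le(c_1^2+c_2^2)(c_0^2+c_s^2)\,\kappa(B_hA^1_h)\preccurlyeq\kappa(B_hA^1_h)$, which is the claim. The main obstacle, as usual in this type of argument, is the third step: one must check that the $P=I^{\text{av}}_h$ supplied by Lemma~\ref{approxI} is genuinely a map into $V_h$ (it is, by construction of the nodal values) and that applying Lemma~\ref{approxI} is legitimate even though that lemma is phrased for $v_h\in\mathbb{V}_h$ whereas here $v_h\in\mathcal{V}_h\subset\mathbb{V}_h$ — which is immediate. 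A minor subtlety worth a sentence is that $\mathcal{S}_h$ is only required to be \emph{spectrally equivalent} to $h_\mathcal{T}^{-2}$-weighted $L^2$ on $\mathcal{V}_h$ via \eqref{ShCR}, so all three $\mathcal{S}_h$-dependent estimates should be stated through \eqref{ShCR} rather than any explicit form of $\mathcal{S}_h$; the hidden constants then only pick up the equivalence constant in \eqref{ShCR}, which is absorbed into $\preccurlyeq$.
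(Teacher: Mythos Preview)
Your proof is correct and follows essentially the same route as the paper: verify the hypotheses of Lemma~\ref{ASP} with $\mathcal{V}=\mathcal{V}_h$, $V=V_h$, $\mathcal{A}=\mathcal{A}^1_h$, $A=A^1_h$, $P=I^{\text{av}}_h$, using $Q_h\lr{v_h}=0$ for $v_h\in\mathcal{V}_h$ together with Lemma~\ref{approxI} and \eqref{ShCR}, and then pass from the SPD case $c=1$ to the semi-definite case $c=0$ via Corollary~\ref{FSP2} with $\Pi=I_{\mathcal{V}_h}$ and the discrete Poincar\'e inequality~\eqref{disPoincare}. The only cosmetic difference is that the paper explicitly orders the argument as ``first $c=1$, then reduce $c=0$ to $c=1$'' rather than framing it as quotienting, but the content is identical.
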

\begin{proof}
For $v_h\in\mathcal{V}_h$, it follows that $Q_h\lr{v_h}=0$. When $c=1$, we could use \eqref{Ih1}, \eqref{Ih2}, \eqref{ShCR}, \eqref{Poincare}  to verify those assumptions 
in Lemma \ref{ASP} with $\mathcal{V}=\mathcal{V}_h$, $V=V_h$,  $\mathcal{A}=\mathcal{A}^1_h$, $A=A_h^1$, $B=B_h$, $P=I_h^{\text{av}}$, and obtain that
\begin{equation}\label{curlyAh1}
    \kappa(\mathcal{B}^a_h\mathcal{A}_h^1)\lesssim\kappa(B_hA_h^1),
\end{equation}
which completes the proof if $c=1$. When $c=0,$
it follows from Lemma \ref{FSP2} with $\tilde{\mathcal{V}}=\mathcal{V}=\mathcal{V}_h$, $\mathcal{A}=\mathcal{A}_h$, $\tilde{\mathcal{A}}=\mathcal{A}^1_h$, $\tilde{\mathcal{B}}=\mathcal{B}^a_h$, $\Pi=I_{\mathcal{V}_h}$ and  \eqref{disPoincare}  that 
\begin{equation}\label{curlyAh}
    \kappa(\mathcal{B}^a_h\mathcal{A}_h)\preccurlyeq\kappa(\mathcal{B}^a_h\mathcal{A}_h^1).
\end{equation}
Combining \eqref{curlyAh1} with \eqref{curlyAh} completes the proof.
\end{proof}

Let $\{\phi_i\}_{1\leq i\leq n}$ be a canonical basis of $\mathcal{V}_h$ and $\{\phi^\prime_j\}_{1\leq j\leq n}$ the dual basis of $\mathcal{V}_h^\prime$ such that $\langle\phi^\prime_i,\phi_j\rangle=1$ if $i=j$ and $\langle\phi^\prime_i,\phi_j\rangle=0$ otherwise. Under these basis, $\mathcal{A}^1_h$ is represented as a matrix $\widetilde{\mathcal{A}}^1_h=(a_{ij})_{1\leq i,j\leq n}$ with $a_{ij}=\langle \mathcal{A}_h^1\phi_i,\phi_j\rangle$. The classical Jacobi relaxation $\mathcal{S}_h^a: \mathcal{V}_h^\prime\rightarrow\mathcal{V}_h$ of $\mathcal{A}_h$ is given as 
\begin{equation*}
    \mathcal{S}_h^av^\prime=\sum_ia_{ii}^{-1}\langle v^\prime,\phi_i\rangle\phi_i,\quad v^\prime\in\mathcal{V}_h^\prime.
\end{equation*}
Then the representing matrix of $\mathcal{S}_h^a$ is the diagonal matrix $\text{diag}(a_{11}^{-1},\ldots,a_{nn}^{-1})$.
It is straightforward to check that the smoother $\mathcal{S}_h=\mathcal{S}_h^a$ fulfills the assumption \eqref{ShCR} in Theorem \ref{CRpreconditioner}, namely, 
\begin{equation}\label{Sha}
      \|v_h\|_{(\mathcal{S}_h^a)^{-1}}\simeq\|h_\mathcal{T}^{-1}v_h\|,\quad\forall v_h\in \mathcal{V}_h.
\end{equation}
The preconditioner $B_h$ on the conforming subspace $V_h$ could be the geometric multigrid analyzed in Subsection \ref{subseclinear} or simply AMG.

Let $\mathcal{D}_h$, $\mathcal{L}_h$ be the diagonal and strict lower triangular part of  $\widetilde{\mathcal{A}}^1_h$, respectively. Let $\mathcal{S}_h^m: \mathcal{V}_h^\prime\rightarrow\mathcal{V}_h$ be the forward Gauss--Seidel relaxation of $\mathcal{A}^1_h$, which corresponds to the matrix $(\mathcal{D}_h+\mathcal{L}_h)^{-1}$. We obtain a two-level multiplicative preconditioner for the CR method. 
\begin{theorem}\label{CRpreconditioner2}
Let $\mathcal{B}_h^m$ be the two-level method in Algorithm \ref{twolevelSSC} with $\mathcal{V}=\mathcal{V}_h$, $V=V_h,$ $\mathcal{A}=\mathcal{A}^1_h,$ $\mathcal{S}=\mathcal{S}^m_h$, $A=A_h^1$. Then $\mathcal{B}_h^m$ is a preconditioner for $\mathcal{A}_h$ satisfying
\begin{equation*}
    \kappa(\mathcal{B}_h^m\mathcal{A}_h)\preccurlyeq1.
\end{equation*}
\end{theorem}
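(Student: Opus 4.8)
The plan is to mirror the two-step argument behind Theorem~\ref{CRpreconditioner}: first prove $\kappa(\mathcal{B}_h^m\mathcal{A}_h^1)\preccurlyeq1$ for the definite operator by means of the multiplicative two-level identity in Lemma~\ref{ASP2}, and then transfer the bound to the semi-definite operator $\mathcal{A}_h=\mathcal{A}_h^0$ through Corollary~\ref{FSP2} and the discrete Poincar\'e inequality \eqref{disPoincare}.

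For the definite case I would apply Lemma~\ref{ASP2} with $\mathcal{V}=\mathcal{V}_h$, $V=V_h$, $\mathcal{A}=\mathcal{A}_h^1$, $\mathcal{S}=\mathcal{S}_h^m$, $A=A_h^1$, which yields $\|I-\mathcal{B}_h^m\mathcal{A}_h^1\|_{\mathcal{A}_h^1}=1-1/c$ with $c=\sup_{v_h\in\mathcal{V}_h,\ \|v_h\|_{\mathcal{A}_h^1}=1}\ \inf_{w_h\in V_h}\|v_h-w_h\|_{\bar{\mathcal{S}}_h^{-1}}$ and $\bar{\mathcal{S}}_h=(\mathcal{S}_h^m)^t+\mathcal{S}_h^m-(\mathcal{S}_h^m)^t\mathcal{A}_h^1\mathcal{S}_h^m$ the symmetrized forward Gauss--Seidel operator. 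It then suffices to show $c\preccurlyeq1$, and I would do so by combining two facts. The first is that $\bar{\mathcal{S}}_h$ is SPD with $\|v_h\|_{\bar{\mathcal{S}}_h^{-1}}\preccurlyeq\|h_\mathcal{T}^{-1}v_h\|$ for all $v_h\in\mathcal{V}_h$; this comes from the identity $\bar{\mathcal{S}}_h^{-1}=(\mathcal{D}_h+\mathcal{L}_h)\mathcal{D}_h^{-1}(\mathcal{D}_h+\mathcal{L}_h^t)=\widetilde{\mathcal{A}}_h^1+\mathcal{L}_h\mathcal{D}_h^{-1}\mathcal{L}_h^t$, the bounds $\langle\mathcal{L}_h\mathcal{D}_h^{-1}\mathcal{L}_h^tv,v\rangle\preccurlyeq\langle\mathcal{D}_hv,v\rangle$ and $\langle\widetilde{\mathcal{A}}_h^1v,v\rangle\preccurlyeq\langle\mathcal{D}_hv,v\rangle$ (both from the uniformly bounded number of nonzero entries per row of $\widetilde{\mathcal{A}}_h^1$, a consequence of \textbf{A3}--\textbf{A4}), and the scaling $\langle\mathcal{D}_hv,v\rangle\simeq\|h_\mathcal{T}^{-1}v_h\|^2$ already noted in \eqref{Sha}. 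The second is the choice $w_h=I_h^{\text{av}}v_h\in V_h$: since $Q_h\lr{v_h}=0$ for $v_h\in\mathcal{V}_h$, estimate \eqref{Ih2} of Lemma~\ref{approxI} gives $\|h_\mathcal{T}^{-1}(v_h-I_h^{\text{av}}v_h)\|\preccurlyeq\|\nabla_hv_h\|\le\|v_h\|_{\mathcal{A}_h^1}$. Together these yield $c\preccurlyeq1$, hence $\|I-\mathcal{B}_h^m\mathcal{A}_h^1\|_{\mathcal{A}_h^1}<1$; in particular $\mathcal{B}_h^m$ is SPD, the spectrum of $\mathcal{B}_h^m\mathcal{A}_h^1$ lies in $[1/c,\,2-1/c]$, and $\kappa(\mathcal{B}_h^m\mathcal{A}_h^1)\le2c-1\preccurlyeq1$.

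For the semi-definite case $c=0$, the operator $\mathcal{A}_h=\mathcal{A}_h^0$ is SPSD with kernel equal to the constant functions on $\mathcal{M}_h$ (using $Q_h\lr{v_h}=0$). I would invoke Corollary~\ref{FSP2} with $\mathcal{V}=\widetilde{\mathcal{V}}=\mathcal{V}_h$, $\mathcal{A}=\mathcal{A}_h^0$, $\widetilde{\mathcal{A}}=\mathcal{A}_h^1$, $\widetilde{\mathcal{B}}=\mathcal{B}_h^m$, and $\Pi=I_{\mathcal{V}_h}$: kernel preservation is automatic because $N(\mathcal{A}_h^1)=\{0\}$; the bound $|v_h|_{\mathcal{A}_h^0}\le|v_h|_{\mathcal{A}_h^1}$ is immediate; and for any $v_h\in\mathcal{V}_h$ the element $\tilde v_h=v_h-\bar v_h$ satisfies $\tilde v_h-v_h\in N(\mathcal{A}_h^0)$ together with $|\tilde v_h|_{\mathcal{A}_h^1}^2=\|\nabla_hv_h\|^2+\|v_h-\bar v_h\|^2\preccurlyeq\|\nabla_hv_h\|^2=|v_h|_{\mathcal{A}_h^0}^2$ by \eqref{disPoincare}. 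Corollary~\ref{FSP2} then gives $\kappa(\mathcal{B}_h^m\mathcal{A}_h^0)\preccurlyeq\kappa(\mathcal{B}_h^m\mathcal{A}_h^1)\preccurlyeq1$, and combined with the definite case this settles both $c\in\{0,1\}$.

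The only genuinely new ingredient is the spectral equivalence $\|v_h\|_{\bar{\mathcal{S}}_h^{-1}}\simeq\|h_\mathcal{T}^{-1}v_h\|$ for the symmetrized Gauss--Seidel smoother, so I expect the estimate $\langle\mathcal{L}_h\mathcal{D}_h^{-1}\mathcal{L}_h^tv,v\rangle\preccurlyeq\langle\mathcal{D}_hv,v\rangle$ to be the main technical point; it rests on the uniformly bounded row sparsity of the CR stiffness-plus-mass matrix, which is guaranteed by the shape regularity \textbf{A3} and the bounded valence \textbf{A4}. Everything else is a direct transcription of the additive argument in Theorem~\ref{CRpreconditioner} and of Corollary~\ref{FSP2}.
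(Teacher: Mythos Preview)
Your proposal is correct and follows essentially the same route as the paper: apply Lemma~\ref{ASP2} with the choice $w_h=I_h^{\text{av}}v_h$ and the spectral equivalence between the symmetrized Gauss--Seidel and Jacobi smoothers to bound $\kappa(\mathcal{B}_h^m\mathcal{A}_h^1)$, then pass to the semi-definite case via Corollary~\ref{FSP2} and \eqref{disPoincare}. The only cosmetic difference is that you spell out the identity $\bar{\mathcal{S}}_h^{-1}=(\mathcal{D}_h+\mathcal{L}_h)\mathcal{D}_h^{-1}(\mathcal{D}_h+\mathcal{L}_h^t)=\widetilde{\mathcal{A}}_h^1+\mathcal{L}_h\mathcal{D}_h^{-1}\mathcal{L}_h^t$ and the sparsity argument explicitly, whereas the paper cites this equivalence as \eqref{JG} from \cite{Zikatanov2008,XuZikatanov2017}.
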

\begin{proof}
Given $v_h\in\mathcal{V}_h$, it is shown in \cite{Zikatanov2008,XuZikatanov2017} that 
\begin{equation}\label{JG}
    \|v_h\|_{(\bar{\mathcal{S}}_h^m)^{-1}}\simeq \|v_h\|_{(\mathcal{S}_h^a)^{-1}}.
\end{equation}
It follows from \eqref{JG}, \eqref{Sha}, and \eqref{Ih2} that
\begin{equation}\label{Sham}
\begin{aligned}
    &\inf_{w\in \mathcal{V}_h}\|v_h-w\|_{(\bar{\mathcal{S}}_h^m)^{-1}}\simeq\inf_{w\in \mathcal{V}_h}\|v_h-w\|_{(\mathcal{S}_h^a)^{-1}}\\
    &\simeq\inf_{w\in \mathcal{V}_h}\|h^{-1}_\mathcal{T}(v_h-w)\|\leq\|h^{-1}_\mathcal{T}(v_h-I_h^{\text{av}}v_h)\|\preccurlyeq\|\nabla_hv_h\|.
\end{aligned}
\end{equation}
Therefore combining \eqref{Sham} and Lemma \ref{ASP2} shows that $I-\mathcal{B}_h^m\mathcal{A}_h^1$ is a contraction and $\kappa(\mathcal{B}_h^m\mathcal{A}_h^1)\preccurlyeq1$. When $c=0$, we use the same analysis in the proof of Theorem \ref{CRpreconditioner} to obtain $\kappa(\mathcal{B}_h^m\mathcal{A}_h)\preccurlyeq\kappa(\mathcal{B}_h^m\mathcal{A}_h^1)$.
\end{proof}
In practice, the exact inverse $A^{-1}=(A_h^{1})^{-1}$ in Algorithm \ref{twolevelSSC} could be replaced with geometric multigrid or AMG V-cycle for linear nodal elements.

\subsection{Preconditioners for the DG method}\label{subsecdG}
To guarantee the well-posedness of \eqref{DGLB}, the parameter $\alpha$ in $a_h^{\text{DG}}$ is required to be sufficiently large, see, e.g., \cite{DednerMadhavenStinner2013}. 
\begin{lemma}\label{dGcoercive}
There exists a constant $\alpha_0>0$ dependent on $\gamma_0, N_0,$ such that 
\begin{equation*}
  a_h^{\emph{DG}}(v_h,v_h)\simeq\|\nabla_hv_h\|^2+c\|v_h\|^2+\|h_\mathcal{E}^{-\frac{1}{2}}Q_h[v_h]\|_{\mathcal{E}_h}^2,\quad\forall v_h\in\mathbb{V}_h,
\end{equation*}
whenever $\alpha\geq\alpha_0$.
\end{lemma}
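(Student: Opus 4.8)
The plan is to establish the two-sided bound for the DG bilinear form $a_h^{\text{DG}}$ by treating the upper bound (continuity) and lower bound (coercivity) separately, with the coercivity estimate being the part that requires the largeness of $\alpha$. For the upper bound, I would expand $a_h^{\text{DG}}(v_h,v_h) = \|\nabla_hv_h\|^2 - 2\langle\{\nabla_hv_h\cdot\nu_h\},Q_h\lr{v_h}\rangle_{\mathcal{E}_h} + \alpha\|h_\mathcal{E}^{-1/2}Q_h\lr{v_h}\|_{\mathcal{E}_h}^2 + c\|v_h\|^2$, using that $\lr{w_h}$ against $\{\nabla_hw_h\cdot\nu_h\}$ (piecewise constant on each $e$) may be replaced by $Q_h\lr{w_h}$. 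Then Cauchy--Schwarz on the cross term followed by Young's inequality, together with the standard discrete trace (inverse) inequality $\|\nabla_hv_h\cdot\nu_h\|_e \preccurlyeq h_e^{-1/2}\|\nabla_hv_h\|_{\omega_e}$ on each face — which follows from \textbf{A3}, \textbf{A5} and a scaling argument exactly as in the proof of Lemma~\ref{approxI} — bounds the cross term by $\epsilon\|\nabla_hv_h\|^2 + C\epsilon^{-1}\|h_\mathcal{E}^{-1/2}Q_h\lr{v_h}\|_{\mathcal{E}_h}^2$; with \textbf{A4} controlling the overlap of the patches $\omega_e$, this yields $a_h^{\text{DG}}(v_h,v_h) \preccurlyeq \|\nabla_hv_h\|^2 + c\|v_h\|^2 + \|h_\mathcal{E}^{-1/2}Q_h\lr{v_h}\|_{\mathcal{E}_h}^2$ uniformly in $\alpha$ (say for $\alpha$ in a bounded range, or with the constant depending mildly on $\alpha$ which is acceptable once $\alpha$ is fixed).

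For the lower bound, I would use the same discrete trace inequality to estimate the cross term from below: $|2\langle\{\nabla_hv_h\cdot\nu_h\},Q_h\lr{v_h}\rangle_{\mathcal{E}_h}| \le \epsilon\|\nabla_hv_h\|^2 + C_0\epsilon^{-1}\|h_\mathcal{E}^{-1/2}Q_h\lr{v_h}\|_{\mathcal{E}_h}^2$, where $C_0$ depends only on $\gamma_0,N_0$ (through the trace constant and the finite-overlap bound). Choosing $\epsilon = 1/2$ gives
\begin{equation*}
a_h^{\text{DG}}(v_h,v_h) \ge \tfrac12\|\nabla_hv_h\|^2 + (\alpha - 2C_0)\|h_\mathcal{E}^{-1/2}Q_h\lr{v_h}\|_{\mathcal{E}_h}^2 + c\|v_h\|^2,
\end{equation*}
so setting $\alpha_0 := 2C_0 + 1$ forces the middle coefficient to be at least $1$ whenever $\alpha \ge \alpha_0$, and hence $a_h^{\text{DG}}(v_h,v_h) \succcurlyeq \|\nabla_hv_h\|^2 + c\|v_h\|^2 + \|h_\mathcal{E}^{-1/2}Q_h\lr{v_h}\|_{\mathcal{E}_h}^2$. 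Since $\alpha_0$ is built only from $\gamma_0,N_0$, this matches the statement.

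The main obstacle is pinning down the face-wise discrete trace inequality $\|\nabla_hv_h|_\tau\cdot\nu_e\|_e \preccurlyeq h_e^{-1/2}\|\nabla_hv_h\|_\tau$ with a constant depending only on shape regularity, and then summing over $e\in\mathcal{E}_h$ with the right bookkeeping so that each element $\tau$ is counted a bounded number of times — this is where \textbf{A4} (or equivalently the bound $N_0$ on the number of faces per element, itself a consequence of shape regularity plus $N_0$) enters. This is routine but is the only place actual geometry is used; everything else is Cauchy--Schwarz and Young's inequality on a finite-dimensional space. I would also remark that replacing $\lr{v_h}$ by $Q_h\lr{v_h}$ in the consistency terms is legitimate precisely because $\{\nabla_hv_h\cdot\nu_h\}|_e$ is constant on $e$, so the $L^2(e)$ pairing only sees the mean of the jump.
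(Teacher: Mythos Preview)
Your argument is correct and is exactly the standard interior-penalty coercivity proof; the paper itself does not prove Lemma~\ref{dGcoercive} but merely states it and refers to \cite{DednerMadhavenStinner2013} for the well-posedness of the DG scheme. One small over-attribution: the discrete trace inequality $\|\nabla_hv_h|_\tau\cdot\nu_e\|_e \preccurlyeq h_e^{-1/2}\|\nabla_hv_h\|_\tau$ lives on a single flat simplex and needs only shape regularity \textbf{A3} (not \textbf{A5}), and since every $d$-simplex has exactly $d+1$ facets the summation over $e\in\mathcal{E}_h$ counts each element a fixed number of times without invoking \textbf{A4}; the stated dependence of $\alpha_0$ on $N_0$ is therefore harmless but not essential.
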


Lemma \ref{dGcoercive} verifies the positive definiteness of $\mathbb{A}_h.$
The next theorem presents a two-level additive preconditioner for $\mathbb{A}_h.$ The proof is identical to Theorem \ref{CRpreconditioner}.
\begin{theorem}\label{DGpreconditioner}
Let $\mathbb{S}_h: \mathbb{V}_h^\prime\rightarrow\mathbb{V}_h$ satisfy 
\begin{equation}\label{ShdG}
      \|v_h\|_{\mathbb{S}_h^{-1}}\simeq\|h_\mathcal{T}^{-1}v_h\|,\quad\forall v_h\in \mathbb{V}_h.
\end{equation}
Let $\mathbb{I}_h: V_h\rightarrow\mathbb{V}_h$ be the inclusion, $\mathbb{Q}_h=\mathbb{I}_h^\prime,$ and $B_h: V^\prime_h\rightarrow V_h$  a preconditioner for $A^1_h.$ When $\alpha\geq\alpha_0,$ $\mathbb{B}^a_h=\mathbb{S}_h+B_h\mathbb{Q}_h: \mathbb{V}_h^\prime\rightarrow\mathbb{V}_h$ is a preconditioner for $\mathbb{A}_h$ satisfying
\begin{equation*}
    \kappa(\mathbb{B}^a_h\mathbb{A}_h)\preccurlyeq\kappa(B_hA^1_h).
\end{equation*}
\end{theorem}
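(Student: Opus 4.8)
The plan is to apply Lemma~\ref{ASP} (the additive two-level preconditioner) with $\mathcal{V}=\mathbb{V}_h$, $V=V_h$, $\mathcal{A}=\mathbb{A}_h$, $A=A_h^1$, $B=B_h$, $\mathcal{S}=\mathbb{S}_h$, inclusion $\mathcal{I}=\mathbb{I}_h$, $\mathcal{Q}=\mathbb{Q}_h$, and the nodal averaging operator $P=I_h^{\text{av}}$ restricted to $\mathbb{V}_h$. The four hypotheses of Lemma~\ref{ASP} must be checked. First I would verify $\|v\|_{\mathbb{A}_h}\le c_0\|v\|_{A_h^1}$ for $v\in V_h$: on the conforming subspace the jump terms and consistency terms in $a_h^{\text{DG}}$ vanish (since $\lr{v}=0$ on interior faces), so $a_h^{\text{DG}}(v,v)=\|\nabla_h v\|^2+c\|v\|^2\le\langle A_h^1 v,v\rangle$, giving $c_0\simeq 1$. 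Second, the spectral-equivalence requirement $\|v\|_{\mathbb{A}_h}\le c_s\|v\|_{\mathbb{S}_h^{-1}}$: by Lemma~\ref{dGcoercive} (using $\alpha\ge\alpha_0$) one has $\|v\|_{\mathbb{A}_h}^2\simeq\|\nabla_h v\|^2+c\|v\|^2+\|h_\mathcal{E}^{-1/2}Q_h\lr{v}\|_{\mathcal{E}_h}^2$, and each of these is controlled by $\|h_\mathcal{T}^{-1}v\|^2$ via an inverse inequality on each simplex plus a trace inequality for the face term, hence $\lesssim\|v\|_{\mathbb{S}_h^{-1}}^2$ by \eqref{ShdG}.

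Third, the stability estimate $\|I_h^{\text{av}}v\|_{A_h^1}\le c_1\|v\|_{\mathbb{A}_h}$: expanding, $\|I_h^{\text{av}}v\|_{A_h^1}^2=\|\nabla_{\mathcal{M}_h}I_h^{\text{av}}v\|^2+c\|I_h^{\text{av}}v\|^2$, and Lemma~\ref{approxI} bounds $\|\nabla_{\mathcal{M}_h}I_h^{\text{av}}v\|$ by $\|\nabla_h v\|+\|h_\mathcal{E}^{-1/2}Q_h\lr{v}\|_{\mathcal{E}_h}$ and $\|I_h^{\text{av}}v\|$ by $\|v\|$; combined with Lemma~\ref{dGcoercive} this gives $\|I_h^{\text{av}}v\|_{A_h^1}^2\preccurlyeq\|v\|_{\mathbb{A}_h}^2$. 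Fourth, the approximation estimate $\|v-I_h^{\text{av}}v\|_{\mathbb{S}_h^{-1}}\le c_2\|v\|_{\mathbb{A}_h}$: by \eqref{ShdG} this is $\simeq\|h_\mathcal{T}^{-1}(v-I_h^{\text{av}}v)\|$, which \eqref{Ih2} bounds by $\|\nabla_h v\|+\|h_\mathcal{E}^{-1/2}Q_h\lr{v}\|_{\mathcal{E}_h}\preccurlyeq\|v\|_{\mathbb{A}_h}$ again via Lemma~\ref{dGcoercive}. Feeding $c_0,c_s,c_1,c_2\simeq 1$ into the conclusion of Lemma~\ref{ASP} yields $\kappa(\mathbb{B}_h^a\mathbb{A}_h)\le(c_1^2+c_2^2)(c_0^2+c_s^2)\kappa(B_hA_h^1)\preccurlyeq\kappa(B_hA_h^1)$.

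The main subtlety — as the paper hints with ``The proof is identical to Theorem~\ref{CRpreconditioner}'' — is that Lemma~\ref{approxI} is stated for $v_h\in\mathbb{V}_h$, so unlike the CR case no extra bookkeeping is needed to reconcile the jump term $\|h_\mathcal{E}^{-1/2}Q_h\lr{v_h}\|_{\mathcal{E}_h}$ (which vanishes for CR functions but is genuinely present here); one must keep this term throughout and absorb it using the coercivity characterization of Lemma~\ref{dGcoercive} rather than discarding it. The only real obstacle, then, is ensuring the energy norm $\|\cdot\|_{\mathbb{A}_h}$ is genuinely equivalent to the broken-$H^1$-plus-jump norm, which is exactly the content of Lemma~\ref{dGcoercive} and requires the large-penalty hypothesis $\alpha\ge\alpha_0$; with that in hand every step is a routine application of the already-established lemmas, and since all constants depend only on $L,N_0,\gamma_0,c_P$, the bound is uniform.
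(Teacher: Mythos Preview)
Your argument is correct for the positive definite case $c=1$ and indeed matches the paper's approach there. However, there is a genuine gap when $c=0$: Lemma~\ref{ASP} requires $\mathcal{A}$ to be SPD, whereas $\mathbb{A}_h=\mathbb{A}_h^0$ is only SPSD (constants lie in its kernel). More concretely, your third verification step breaks: you expand $\|I_h^{\text{av}}v\|_{A_h^1}^2=\|\nabla_{\mathcal{M}_h}I_h^{\text{av}}v\|^2+\|I_h^{\text{av}}v\|^2$ (note $A_h^1$ always carries the full $L^2$ term, so the ``$+c\|I_h^{\text{av}}v\|^2$'' is a slip), and then need $\|I_h^{\text{av}}v\|^2\preccurlyeq\|v\|^2\preccurlyeq\|v\|_{\mathbb{A}_h}^2$. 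But when $c=0$, Lemma~\ref{dGcoercive} gives $\|v\|_{\mathbb{A}_h^0}^2\simeq\|\nabla_hv\|^2+\|h_\mathcal{E}^{-1/2}Q_h\lr{v}\|_{\mathcal{E}_h}^2$, which does \emph{not} dominate $\|v\|^2$ (take $v$ constant).

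The paper's proof of Theorem~\ref{CRpreconditioner}, which this one mimics, handles this in two stages: first apply Lemma~\ref{ASP} with $\mathcal{A}=\mathbb{A}_h^1$ (SPD) to obtain $\kappa(\mathbb{B}_h^a\mathbb{A}_h^1)\preccurlyeq\kappa(B_hA_h^1)$, exactly as you do; then, for $c=0$, invoke Corollary~\ref{FSP2} with $\widetilde{\mathcal{V}}=\mathcal{V}=\mathbb{V}_h$, $\mathcal{A}=\mathbb{A}_h^0$, $\widetilde{\mathcal{A}}=\mathbb{A}_h^1$, $\widetilde{\mathcal{B}}=\mathbb{B}_h^a$, $\Pi=I_{\mathbb{V}_h}$, using the discrete Poincar\'e inequality~\eqref{disPoincare} to verify the third hypothesis there (take $\tilde v=v-\bar v$). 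This yields $\kappa(\mathbb{B}_h^a\mathbb{A}_h^0)\preccurlyeq\kappa(\mathbb{B}_h^a\mathbb{A}_h^1)$ and closes the gap.
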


Let $\{\psi_i\}_{1\leq i\leq l}$ be a canonical basis of $\mathbb{V}_h$ and $\{\psi^\prime_j\}_{1\leq j\leq l}$ the dual basis of $\mathbb{V}_h^\prime$. Under these basis, $\mathbb{A}^1_h$ is realized as a matrix $\widetilde{\mathbb{A}}^1_h=(\langle\mathbb{A}_h^1\psi_i,\psi_j\rangle)_{1\leq i,j\leq l}$. The Jacobi relaxation $\mathbb{S}_h^a: \mathbb{V}_h^\prime\rightarrow\mathbb{V}_h$ of $\mathbb{A}^1_h$ is represented by the inverse of the diagonal of $\widetilde{\mathbb{A}}^1_h$. Similarly to $\mathcal{S}_h^a,$ the smoother $\mathbb{S}_h=\mathbb{S}_h^a$ satisfies the assumption \eqref{ShdG} in Theorem \ref{DGpreconditioner}.

Let $\mathbb{S}_h^m: \mathbb{V}_h^\prime\rightarrow\mathbb{V}_h$ be the forward Gauss--Seidel relaxation of $\mathbb{A}^1_h$. We also obtain a two-level multiplicative preconditioner for the DG method in the next theorem. The proof is identical to Theorem \ref{CRpreconditioner2}. 
\begin{theorem}\label{DGpreconditioner2}
Let $\mathbb{B}_h^m$ be the two-level method in Algorithm \ref{twolevelSSC} with $\mathcal{V}=\mathbb{V}_h$, $\mathcal{A}=\mathbb{A}^1_h,$ $\mathcal{S}=\mathbb{S}^m_h$, $V=V_h$, $A=A_h^1$. Then $\mathbb{B}_h^m$ is a preconditioner for $\mathbb{A}_h$ satisfying
\begin{equation*}
    \kappa(\mathbb{B}_h^m\mathbb{A}_h)\preccurlyeq1.
\end{equation*}
\end{theorem}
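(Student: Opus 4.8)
The plan is to mirror the proof of Theorem~\ref{CRpreconditioner2} exactly, since the DG setting differs only in that the energy norm now includes the jump-penalty term and the broken gradient, and in that the coercivity/boundedness of $\mathbb{A}_h^1$ requires $\alpha\geq\alpha_0$ via Lemma~\ref{dGcoercive}. First I would invoke Lemma~\ref{dGcoercive} to replace $\langle\mathbb{A}_h^1 v_h,v_h\rangle$ by the equivalent quantity $\|\nabla_hv_h\|^2+\|v_h\|^2+\|h_\mathcal{E}^{-1/2}Q_h\lr{v_h}\|_{\mathcal{E}_h}^2$ up to constants depending only on $\gamma_0,N_0$, so that $\|\cdot\|_{\mathbb{A}_h^1}$ is a genuine norm on $\mathbb{V}_h$ and the framework of Lemma~\ref{ASP2} applies.

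Next, as in the CR case, I would use the identity $\|v_h\|_{(\bar{\mathbb{S}}_h^m)^{-1}}\simeq\|v_h\|_{(\mathbb{S}_h^a)^{-1}}$ relating the symmetrized Gauss--Seidel smoother to the Jacobi smoother (the result of \cite{Zikatanov2008,XuZikatanov2017}), together with the spectral equivalence $\|v_h\|_{(\mathbb{S}_h^a)^{-1}}\simeq\|h_\mathcal{T}^{-1}v_h\|$ for the Jacobi relaxation of $\mathbb{A}_h^1$. Then, taking $w=I_h^{\text{av}}v_h\in V_h\subset\mathbb{V}_h$ as the competitor in the infimum, the approximation estimate \eqref{Ih2} gives
\begin{equation*}
    \inf_{w\in\mathbb{V}_h}\|v_h-w\|_{(\bar{\mathbb{S}}_h^m)^{-1}}\simeq\inf_{w\in\mathbb{V}_h}\|h_\mathcal{T}^{-1}(v_h-w)\|\leq\|h_\mathcal{T}^{-1}(v_h-I_h^{\text{av}}v_h)\|\preccurlyeq\|\nabla_hv_h\|+\|h_\mathcal{E}^{-\frac12}Q_h\lr{v_h}\|_{\mathcal{E}_h}\preccurlyeq\|v_h\|_{\mathbb{A}_h^1}.
\end{equation*}
Hence the constant $c$ in Lemma~\ref{ASP2} is bounded above by an absolute constant, so $I-\mathbb{B}_h^m\mathbb{A}_h^1$ is a contraction in the $\mathbb{A}_h^1$-norm and $\kappa(\mathbb{B}_h^m\mathbb{A}_h^1)\preccurlyeq1$; note that here the infimum over $w\in\mathbb{V}_h$ only shrinks further when we restrict to the conforming subspace, so the upper bound via $I_h^{\text{av}}v_h$ is legitimate, and the boundedness $\|I_h^{\text{av}}v_h\|_{\mathbb{A}_h^1}\preccurlyeq\|v_h\|_{\mathbb{A}_h^1}$ needed for the two-level step comes from \eqref{Ih1}, \eqref{Ih2}, and the Poincar\'e-type control.

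Finally, for the semi-definite case $c=0$, I would apply Corollary~\ref{FSP2} (equivalently Lemma~\ref{FSP} on quotient spaces) with $\widetilde{\mathcal{V}}=\mathcal{V}=\mathbb{V}_h$, $\mathcal{A}=\mathbb{A}_h^0$, $\widetilde{\mathcal{A}}=\mathbb{A}_h^1$, $\widetilde{\mathcal{B}}=\mathbb{B}_h^m$, $\Pi=I_{\mathbb{V}_h}$, using the discrete Poincar\'e inequality \eqref{disPoincare} to bound $\|v_h-\bar v_h\|\preccurlyeq\|\nabla_hv_h\|+\|h_\mathcal{E}^{-1/2}Q_h\lr{v_h}\|_{\mathcal{E}_h}$, which gives the two stability constants $c_0,c_1$ and hence $\kappa(\mathbb{B}_h^m\mathbb{A}_h)\preccurlyeq\kappa(\mathbb{B}_h^m\mathbb{A}_h^1)\preccurlyeq1$. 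I expect no serious obstacle: the only point requiring care is checking that the broken-gradient and jump terms are correctly absorbed by \eqref{Ih1}--\eqref{Ih2} and Lemma~\ref{dGcoercive}, i.e.\ that the competitor $I_h^{\text{av}}v_h$, being conforming, makes the jump contribution in the $\bar{\mathbb{S}}_h^m$-norm vanish while its gradient-energy is controlled — but since $\mathbb{S}_h^m$ acts as the $h_\mathcal{T}^{-1}$-weighted $L^2$ smoother (no jump weighting), this is exactly the estimate already used in Theorem~\ref{CRpreconditioner2}, so the argument transfers verbatim.
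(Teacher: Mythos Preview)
Your proposal is correct and follows essentially the same approach as the paper, which simply states that the proof is identical to that of Theorem~\ref{CRpreconditioner2}. You have correctly identified the two adaptations needed in the DG setting---invoking Lemma~\ref{dGcoercive} so that the right-hand side of \eqref{Ih2} (now including the jump term) is absorbed into $\|v_h\|_{\mathbb{A}_h^1}$, and using \eqref{disPoincare} for the $c=0$ reduction---and the remainder transfers verbatim from the CR argument.
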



\begin{figure}[tbhp]
\centering
\subfloat[]{\label{torus0}\includegraphics[width=4cm,height=4cm]{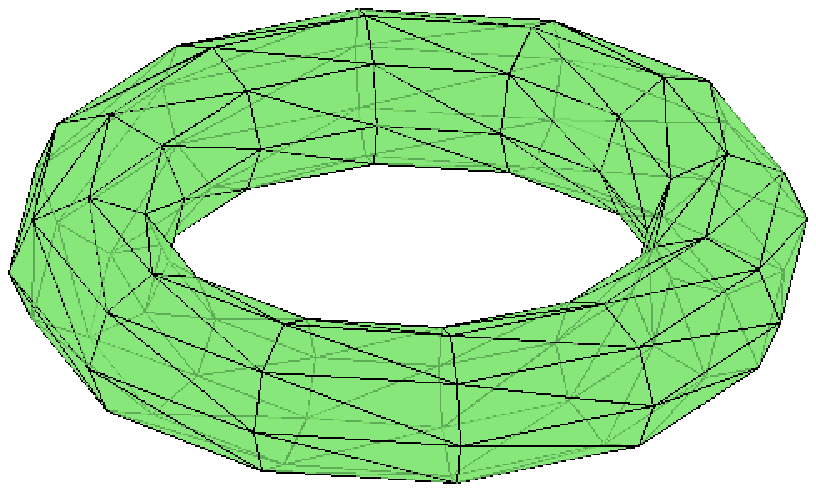}}
\subfloat[]{\label{torus1}\includegraphics[width=4cm,height=4cm]{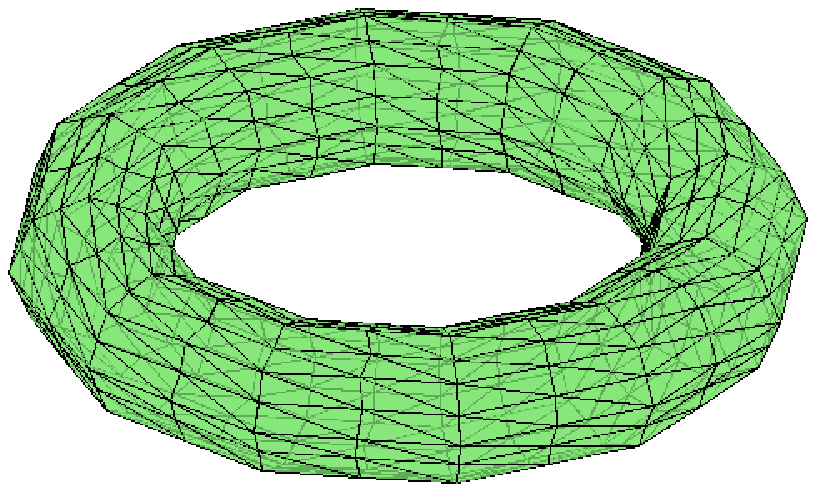}}
\subfloat[]{\label{torus2}\includegraphics[width=4cm,height=4cm]{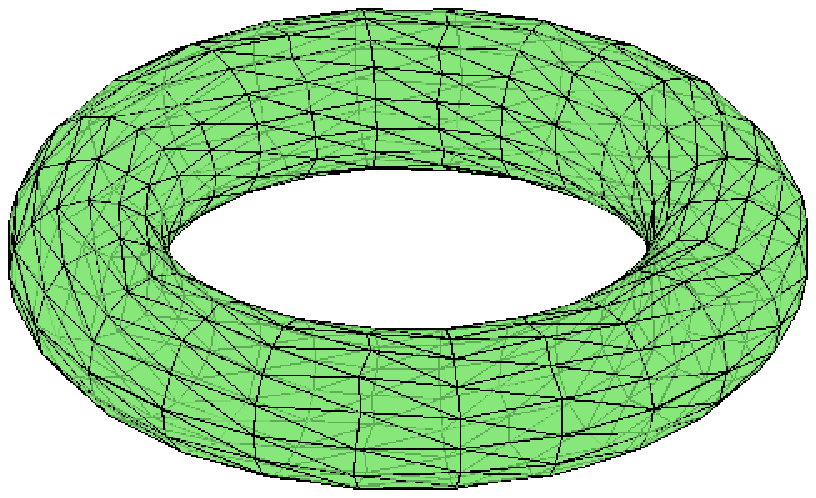}}
\caption{Triangulations of $\mathbb{T}^2$; (a) an initial mesh $\mathbb{T}_0^2$, 192 elements; (b) a mesh on $\mathbb{T}_0^2$, 768 elements; (c) a mesh with vertices on $\mathbb{T}^2$, 768 elements.}
\label{3tori}
\end{figure}

\begin{table}[ht]
\caption{PCG iterations for the linear element on $\mathbb{T}^2$}
\centering
\begin{tabular}{|c|c|c|c|c|c|c|c|c|}
\hline
$N$ & $B_{\text{amg}}^a$&$E_{\text{amg}}^a$ &$B_{\text{amg}}^m$ & $E_{\text{amg}}^m$ 
 &$B_h^a$
& $E_h^a$  & $B_h^m$&  $E_h^m$ \\
\hline
768&29&7.79e-7&9&1.67e-7&12&8.17e-8&9&4.34e-7\\
3072&31&7.05e-7&9&9.67e-7&28&9.92e-7&9&1.82e-7\\
12288&46&7.52e-7&13&5.80e-7&39&7.02e-7&10&2.94e-7\\
49152&65&9.36e-7&17&9.17e-7&45&8.63e-7&11&2.21e-7\\
196608&99&9.48e-7&23&5.62e-7&51&7.19e-7&11&5.69e-7\\
786432&133&9.53e-7&31&6.73e-7&56&8.34e-7&12&2.63e-7\\
\hline
\end{tabular}
\label{T2lineartab}
\end{table}

\begin{table}[ht]
\caption{PCG iterations for the linear element on $\mathbb{S}^3$}
\centering
\begin{tabular}{|c|c|c|c|c|c|c|c|c|}
\hline
$N$ & $B_{\text{amg}}^a$ & $E_{\text{amg}}^a$ &$B_{\text{amg}}^m$ & $E_{\text{amg}}^m$ 
 &$B_h^a$
& $E_h^a$ & $B_h^m$&  $E_h^m$\\
\hline
128&13&6.47e-7&6&2.12e-7&4&8.17e-8&4&6.50e-7\\
1024&18&6.36e-7&9&6.47e-7&12&9.92e-7&7&7.15e-7\\
8192&22&9.87e-7&10&8.22e-7&17&7.02e-7&9&4.13e-7\\
65536&40&9.85e-7&14&5.65e-7&23&8.63e-7&10&8.00e-7\\
524288&73&8.03e-7&20&8.30e-7&27&7.19e-7&12&3.06e-7\\
4194304&104&8.94e-7&26&6.64e-7&31&8.34e-7&13&4.47e-7\\
\hline
\end{tabular}
\label{S3lineartab}
\end{table}

\begin{table}[ht]
\caption{Convergence history of discretization errors on $\mathbb{S}^3$}
\centering
\begin{tabular}{|c|c|c|c|c|c|c|}
\hline
$N$ & $\|u-u_h\|$& order &$\|u-u^{\text{CR}}_h\|$& order & $\|u-u_h^{\text{DG}}\|$&order\\
\hline
128&2.14&&2.09&&2.09&\\
1024&9.37e-1&1.19&9.12e-1 &1.20&9.12e-1&1.20\\
8192&2.82e-1&1.73&2.72e-1&1.75&2.72e-1&1.75\\
65536&7.41e-2&1.93&7.11e-2&1.94&7.11e-2&1.94\\
524288&1.88e-2&1.98&1.80e-2&1.98&1.80e-2&1.98\\
4194304&4.71e-3&2.00&4.51e-3&2.00&4.51e-3&2.00\\
\hline
\end{tabular}
\label{S3convergencetab}
\end{table}

\begin{table}[ht]
\caption{PCG iterations for the CR element on $\mathbb{T}^2$}
\centering
\begin{tabular}{|c|c|c|c|c|c|c|}
\hline
$N$ & $\mathcal{B}_{\text{amg}}^m$ & $\mathcal{E}_{\text{amg}}^m$ 
 &$\mathcal{B}_h^a$
& $\mathcal{E}_h^a$ & $\mathcal{B}_h^m$&  $\mathcal{E}_h^m$\\
\hline
768&10&6.79e-7&37&7.81e-7&10&7.06e-7\\
3072&13&7.24e-7&43&7.13e-7&12&3.73e-7\\
12288&17&4.06e-7&45&9.68e-7&13&5.95e-7\\
49152&22&4.99e-7&49&9.77e-7&18&5.03e-7\\
196608&29&8.21e-7&54&9.62e-7&24&6.13e-7\\
786432&38&9.28e-7&61&8.42e-7&32&5.71e-7\\
\hline
\end{tabular}
\label{T2NCtab}
\end{table}

\begin{table}[tbhp]
\caption{PCG iterations for the DG method on $\mathbb{T}^2$}
\centering
\begin{tabular}{|c|c|c|c|c|c|c|}
\hline
$N$ & $\mathbb{B}_{\text{amg}}^m$ & $\mathbb{E}_{\text{amg}}^m$ 
 &$\mathbb{B}_h^a$
& $\mathbb{E}_h^a$ & $\mathbb{B}_h^m$&  $\mathbb{E}_h^m$ \\
\hline
768&19&5.65e-7&66&9.77e-7&16&7.94e-7\\
3072&25&6.47e-7&69&8.50e-7&18&5.87e-7\\
12288&33&7.32e-7&72&8.57e-7&19&7.41e-7\\
49152&43&9.77e-7&76&8.18e-7&20&8.17e-7\\
196608&59&8.65e-7&80&9.83e-7&24&6.80e-7\\
786432&79&8.73e-7&86&9.83e-7&32&6.75e-7\\
\hline
\end{tabular}
\label{T2DGtab}
\end{table}

\begin{table}[ht]
\caption{PCG iterations for  the CR element on $\mathbb{S}^3$}
\centering
\begin{tabular}{|c|c|c|c|c|c|c|}
\hline
$N$ & $\mathcal{B}_{\text{amg}}^m$ & $\mathcal{E}_{\text{amg}}^m$ 
 &$\mathcal{B}_h^a$
& $\mathcal{E}_h^a$  & $\mathcal{B}_h^m$&  $\mathcal{E}_h^m$ \\
\hline
128&7&5.02e-7&22&8.27e-7&6&6.75e-7\\
1024&9&2.08e-7&27&5.65e-7&7&7.88e-7\\
8192&12&7.53e-7&29&7.59e-7&9&4.19e-7\\
65536&19&5.61e-7&32&8.07e-7&12&5.85e-7\\
524288&24&9.58e-7&36&7.53e-7&17&7.64e-7\\
4194304&34&8.08e-7&44&7.75e-7&23&5.80e-7\\
\hline
\end{tabular}
\label{S3NCtab}
\end{table}

\begin{table}[ht]
\caption{PCG iterations for  the DG method on $\mathbb{S}^3$}
\centering
\begin{tabular}{|c|c|c|c|c|c|c|}
\hline
$N$ & $\mathbb{B}_{\text{amg}}^m$ & $\mathbb{E}_{\text{amg}}^m$ 
 &$\mathbb{B}_h^a$
& $\mathbb{E}_h^a$  & $\mathbb{B}_h^m$&  $\mathbb{E}_h^m$\\
\hline
128&34&6.96e-7&84&8.17e-8&29&9.83e-7\\
1024&38&6.69e-7&104&9.92e-7&32&4.78e-7\\
8192&46&9.24e-7&108&7.02e-7&32&6.12e-7\\
65536&57&8.99e-7&123&8.63e-7&33&8.94e-7\\
524288&75&7.79e-7&134&7.19e-7&36&9.01e-7\\
4194304&105&9.25e-7&147&9.64e-7&39&8.17e-7\\
\hline
\end{tabular}
\label{S3DGtab}
\end{table}
\section{Numerical experiments}\label{secNE}
In this section, we test the performance of several additive and multiplicative preconditioners for the conforming linear, CR and DG discretizations of the problem \eqref{LB} on 2 and 3 dimensional hypersurfaces. In particular, we consider the 2-torus
\begin{equation*}
    \mathbb{T}^2=\left\{x\in\mathbb{R}^3: d_{\mathbb{T}^2}(x)=\sqrt{\big(\sqrt{x_1^2+x_2^2}-R\big)^2+x_3^2}-r=0\right\}
\end{equation*}
with $R=2, r=0.5,$ 
and the unit 3-sphere \begin{equation*}
\mathbb{S}^3=\left\{x\in\mathbb{R}^4: d_{\mathbb{S}^3}(x)=\sqrt{x_1^2+x_2^2+x_3^2+x_4^2}-1=0\right\}.
\end{equation*}
The initial triangulation of $\mathbb{T}^2$ is $\mathbb{T}_0^2$ shown in Fig.~\ref{torus0}. The reference grid sequence on  $\mathbb{T}_0^2$ is constructed by successively quad-refining the initial mesh (see Fig.~\ref{torus1}). Using the signed distance function $d_{\mathbb{T}^2}$, we construct the function $\Phi$ in \eqref{Phi}. Then the true grid hierarchy for $\mathbb{T}^2$ is obtained by mapping reference grid vertices from $\mathbb{T}_0^2$ to $\mathbb{T}^2$ via $\Phi$, see Fig.~\ref{torus2}.

The triangulation of $\mathbb{S}^3$ could not be visualized in $\mathbb{R}^3$. Let $p_1=(1, 0, 0, 0)$, $p_2=(0, 1, 0, 0)$, $p_3=(-1, 0, 0, 0)$, $p_4=(0, -1, 0, 0)$, 
$p_5=(0, 0, 1, 0)$, $p_6=(0, 0, -1, 0)$, $p_7=(0, 0, 0, 1)$, $p_8=(0, 0, 0, -1)$, and $\overline{p_ip_jp_kp_\ell}$ the simplex with vertices $p_i, p_j, p_k, p_\ell$. The initial mesh of $\mathbb{S}^3$ consists of the following simplexes $\overline{p_1p_2p_5p_7}$, $\overline{p_3p_5p_2p_7}$, $\overline{p_3p_4p_5p_7}$, $\overline{p_1p_5p_4p_7}$, $\overline{p_1p_6p_2p_7}$, $\overline{p_3p_2p_6p_7}$, $\overline{p_3p_6p_4p_7}$, $\overline{p_1p_4p_6p_7}$, $\overline{p_8p_1p_2p_5}$, $\overline{p_8p_3p_5p_2}$,  $\overline{p_8p_3p_4p_5}$,\\ $\overline{p_8p_1p_5p_4}$, $\overline{p_8p_1p_6p_2}$, $\overline{p_8p_3p_2p_6}$, $\overline{p_8p_3p_6p_4}$, $\overline{p_8p_1p_4p_6}$. This simplicial mesh is uniformly octa-refined by the algorithm in \cite{Bey2000} to generate a sequence of reference meshes, which are used to construct the true triangluations of $\mathbb{S}^3$ via $\Phi$ based on the signed distance function $d_{\mathbb{S}^3}$. To ensure the correctness of the code in $\mathbb{R}^4$, we compute the discretization errors using the exact solution \begin{equation*}
    u(x)=x_1+2x_2+3x_3+x_4,\quad f=-\Delta_{\mathbb{S}^3}u=3u
\end{equation*}
for \eqref{LB} with $c=0$,
see Table \ref{S3convergencetab} for the convergence history.

In each experiment, the algebraic system of linear equations are solved by the PCG method,  implemented as the function \textsf{pcg}  with error tolerance $10^{-6}$ in MATLAB R2020a. The preconditioner might be the classical AMG algorithm with filtering threshold $\theta=0.025$, two-point interpolation, and exact solve in the coarsest level (see \cite{RugeStuben1987,XuZikatanov2017}) and the corresponding code is available in the iFEM package \cite{iFEM}. By $B_{\text{amg}}^m$, $\mathcal{B}_{\text{amg}}^m$, $\mathbb{B}_{\text{amg}}^m$, we denote the direct AMG V-cycle preconditioners with two pre-smoothing and two post-smoothing steps at each level for the linear nodal element, CR element, and DG stiffness matrices, respectively. Similarly, $B_{\text{amg}}^a$ is the AMG BPX-type multilevel preconditioner for the nodal element stiffness matrix. For the semi-definite problem \eqref{dLB}, \eqref{CRLB}, \eqref{DGLB} with $c=0$, direct AMG preconditioners are constructed based on corresponding positive definite problems with $c=1.$

In each table, the PCG iterative errors based on preconditioners $B_{\text{amg}}^m$, $B_{\text{amg}}^a$, $B_h^m$, $B_h^a$, $\mathcal{B}_{\text{amg}}^m$, $\mathbb{B}_{\text{amg}}^m$ are denoted as  $E_{\text{amg}}^m$, $E_{\text{amg}}^a$, $E_h^m$, $E_h^a$, $\mathcal{E}_{\text{amg}}^m$, $\mathbb{E}_{\text{amg}}^m$, respectively. By $N$ we denote the number of elements in the current mesh.

\subsection{Linear nodal elements} 

For the linear discretization \eqref{dLB} with $c=1$ on $\mathbb{T}^2$ and with $c=0$ on $\mathbb{S}^3$, we implement the multilevel additive preconditioner $B^a_h=\Pi_h\widehat{B}^a_h\Pi_h^\prime$ and  multiplicative preconditioner $B^m_h=\Pi_h\widehat{B}^m_h\Pi_h^\prime$ with $\widehat{B}^a_h$ and $\widehat{B}^m_h$ described in \eqref{Bhahat}, \eqref{mguniform} and Algorithm \ref{wowo}. Here $\widehat{B}^m_h$ and $\widehat{B}^a_h$ utilize  exact solve in the coarsest level. Under the usual nodal basis for $V_h$ and $\widehat{V}_h$, the transfer operators $\Pi_h$, $\Pi_h^\prime$
are simply represented as identity matrices.

It is observed from Tables \ref{T2lineartab} and \ref{S3lineartab} that the direct AMG preconditioners work well for the linear nodal element and the geometric multigrid is more efficient and robust, especially for extremely large systems.

\subsection{CR and DG discretizations}
For the CR \eqref{CRLB} and DG \eqref{DGLB} methods with $c=1$ on $\mathbb{T}^2$ and with $c=0$ on $\mathbb{S}^3$, we test the two-level preconditioners $\mathcal{B}_h^a$, $\mathcal{B}_h^m$ in Theorems \ref{CRpreconditioner} and \ref{CRpreconditioner2} and $\mathbb{B}_h^a$, $\mathbb{B}_h^m$ in Theorems \ref{DGpreconditioner} and \ref{DGpreconditioner2}. In the finest level, the smoother for $\mathcal{B}_h^a$, $\mathbb{B}_h^a$ is the Jacobi relaxation while $\mathcal{B}_h^m$, $\mathbb{B}_h^m$ use two forward and two backward Gauss--Seidel local relaxations. In the coarse level, $\mathcal{B}_h^a$, $\mathbb{B}_h^a$, $\mathcal{B}_h^m$, $\mathbb{B}_h^m$ utilize the AMG V-cycle for the linear nodal element as the approximate solve and  grid hierarchy is not required. 

As shown in Tables \ref{T2NCtab}--\ref{S3DGtab}, all preconditioners work well for CR and DG methods. The direct AMG V-cycle preconditioners $\mathcal{B}_{\text{amg}}^m$, $\mathbb{B}_{\text{amg}}^m$ outperform the corresponding two-level additive $\mathcal{B}_h^a$, $\mathbb{B}_h^a$. However, the semi-analytic two-level multiplicative preconditioners $\mathcal{B}_h^m$, $\mathbb{B}_h^m$ are no worse than direct AMG. At no additional cost, they are obviously more efficient than AMG preconditioners when applied to  the CR method on $\mathbb{S}^3$ and the DG method on $\mathbb{T}^2$ and $\mathbb{S}^3$.

\section{Concluding remarks}\label{secconclusion}
In this paper, we developed optimal FASP preconditioners for the linear nodal element, CR element, and DG method for the second order elliptic equation on hypersurfaces. 
To achieve higher order accuracy on surfaces, it is necessary to use a isoparametric discrete surface $\mathcal{M}_h^p$ under some piecewise polynomial parametrization of degree $p$ with $p\geq2$,   cf.~\cite{Demlow2009}. For example, let $\Phi_h^p$ be the nodal interpolant of $\Phi$ of degree $p$ on $\mathcal{M}_h$. The higher order discrete surface is defined as $\mathcal{M}_h^p=\Phi_h^p(\mathcal{M}_h)$. The isoparametric finite element space of degree $p$ is 
\begin{equation*}
    \mathcal{V}_h^p=\big\{v_h\in H^1(\mathcal{M}^p_h): (v_h\circ\Phi_h^p)|_\tau\in\mathcal{P}_p(\tau)~\forall \tau\in\mathcal{T}_h\big\}.
\end{equation*} 
Then one could numerically solve \eqref{LB} on $\mathcal{M}_h^p$ based on $\mathcal{V}_h^p.$ We note that $\mathcal{V}_h^p$ is naturally connected with the nodal element space $\bar{\mathcal{V}}_h^p$ of degree $p$ on $\mathcal{M}_h$ via the transfer operator $\Pi_h^p: \bar{\mathcal{V}}_h^p\rightarrow\mathcal{V}_h^p$, where $\Pi_h^p(v_h)=v_h\circ(\Phi_h^p)^{-1}$. Let $\bar{A}_h^p$ be the discrete operator of the conforming nodal element of degree $p$ on $\mathcal{M}_h.$  Following the same analysis in Section \ref{seclinear}, $\Pi_h^p\bar{A}_h^p(\Pi_h^p)^\prime$ is a preconditioner for the higher order method on $\mathcal{M}_h^p$. In the next step, one could use the conforming linear element space on $\mathcal{M}_h$ as the auxiliary space to construct a two-level preconditioner for $\bar{A}_h^p$ as in Section \ref{secNCDG}. In the end, the discrete operator $A_h^1$ of linear elements could be further approximated using results in Section \ref{seclinear}.

\bibliographystyle{siamplain}

\end{document}